\documentclass[11pt]{article}
\usepackage{arxiv}
\usepackage[utf8]{inputenc}
\usepackage{graphicx}%
\usepackage{multirow}%
\usepackage{amsmath,amssymb,amsfonts}%
\usepackage{amsthm}%
\usepackage{mathrsfs}%
\usepackage[title]{appendix}%
\usepackage{xcolor}%
\usepackage{textcomp}%
\usepackage{manyfoot}%
\usepackage{booktabs}%
\usepackage{longtable}%
\usepackage{hyperref}
\hypersetup{
    colorlinks=true,
    linkcolor=blue,
    filecolor=magenta,      
    urlcolor=blue,
    citecolor=blue
}

\usepackage[sort&compress]{natbib}
 \bibpunct[, ]{[}{]}{,}{n}{}{,}%
\newtheorem{theorem}{Theorem}%
\newtheorem{proposition}[theorem]{Proposition}%
\newtheorem{lemma}[theorem]{Lemma}%

\newtheorem{remark}{Remark}%

\newtheorem{definition}{Definition}%
\newtheorem{assumption}{Assumption}%

\newcommand{\R}{\mathbb{R}}
\newcommand{\E}{\mathbb{E}}
\newcommand{\Pprob}{\mathbb{P}}
\newcommand{\Filt}{\mathscr{F}}
\newcommand{\Gfilt}{\mathcal{G}}
\newcommand{\Tr}{\operatorname{Tr}}
\newcommand{\diff}{\mathrm{d}}
\newcommand{\Proj}{\operatorname{Proj}}
\newcommand{\Acal}{\mathcal{A}}
\newcommand{\Lcal}{\mathcal{L}}
\newcommand{\Ucal}{\mathcal{U}}
\newcommand{\Qcal}{\mathcal{Q}}
\newcommand{\Pcal}{\mathcal{P}}
\newcommand{\Ccal}{\mathcal{C}}
\newcommand{\Vcal}{\mathcal{V}}
\newcommand{\Sym}{\mathbb{S}}
\newcommand{\Hsing}{\mathcal{H}}
\newcommand{\Lleb}{\mathcal{L}}

\begin{document}

\title{Continuous-Time Information Design for Hurricane Evacuation: Disclosure, Congestion, and Optimal Phasing under Model Uncertainty}
\author{Furkan Sezer\thanks{Texas A\&M University (furkan.sezer@tamu.edu, furkanszr@yahoo.com).}}
\date{\today}

\maketitle

\begin{abstract}
We study continuous-time information design for emergency evacuation, where an Emergency Management Agency (the Stackelberg leader) steers strategic evacuation zones via two levers: \textit{public advisory precision} (information design) and a \textit{tiered release schedule}. The latent storm is a jump-diffusion process with publicly observed rapid-intensification epochs tracked by an exact finite-dimensional belief filter. Zones play a capacity-constrained congestion game on shared corridors with belief-weighted hazard exposure. The running cost couples beliefs to a convex congestion externality, making disclosure double-edged: sharper information reduces false-alarm departures but synchronizes genuine ones, and convex congestion penalizes that synchronization. We prove that: (i) the followers' game admits a potential reduction to a convex control problem; (ii) the leader's distributionally robust relative-entropy problem is characterized by an Isaacs equation whose value is the unique viscosity solution, with verification valid for non-smooth bang-bang feedback; and (iii) without transfers, the leader's first-order condition retains an equilibrium-response term, positioning optimal information design as a second-best congestion toll. Structurally, we show that a staggered evacuation order dominates simultaneous advisories; phased evacuation emerges endogenously as optimal information design. Furthermore, public-signal precision is sign-ambiguous due to an informational Braess effect, where vague advisories are optimal unless complemented by a staggered order. Calibrated to Hurricane Rita using NHC archives, TxDOT capacities, and HRRC surveys, the model reproduces the observed gridlock along the Interstate 45 (I-45) evacuation corridor in Texas. The optimal policy removes essentially all in-transit congestion exposure, reducing social cost by $\mathbf{89\%}$, while staggered disclosure alone yields a $\mathbf{70\%}$ reduction.
\end{abstract}

\keywords{information design, differential games, Stackelberg equilibrium, viscosity solutions, congestion games, hurricane evacuation}

\vspace{0.3cm}
\noindent {\small \textbf{Mathematics Subject Classification (2020):} 91A65, 93E20, 49L25, 91A23, 90C47, 49N90, 35Q93}

\maketitle

\section{Introduction}
Climate-induced extreme weather events present some of the most complex,
high-stakes operational challenges for public asset management and regional
emergency response. When intense meteorological shocks, Hurricanes Katrina, Rita,
Harvey, Irma, or Ian, threaten dense coastal corridors, public safety hinges on
clearing large populations before landfall. Two opposite failures recur. In
Hurricane Rita (2005), an alarming and undifferentiated public message triggered a
near-simultaneous mass departure that saturated the Houston evacuation corridors;
the resulting gridlock, not the storm, became the dominant source of casualties.
In Hurricane Harvey (2017), the absence of a mandatory order left a major
metropolitan population in place during catastrophic flooding. The first is an
\emph{over-disclosure} (synchronisation) failure; the second an
\emph{under-disclosure} (inertia) failure. A theory of evacuation information
design must explain both, and must locate the optimal advisory \emph{between} them.

Historically, emergency management has relied on coarse, deterministic, and
decentralised command. Local municipalities issue orders independently, under
fragmented information and timelines, while the physical infrastructure exhibits
hard, non-linear capacity limits: when multiple dense zones flood shared arteries,
the network transitions abruptly from free flow to gridlock cascade, trapping
evacuees in exposed clusters. Public advisories from the National Hurricane Center
(NHC) and FEMA are powerful coordination tools, but risk communication has
traditionally been treated as passive meteorological transparency, ignoring the
behavioural externalities it triggers on the ground.

We formalise evacuation as a continuous-time stochastic Stackelberg game. A central
agency commits to a public-signalling channel and a tiered release schedule; a
network of zones plays a capacity-constrained congestion game whose hazard exposure
is governed by their beliefs about the storm. The single structural departure from
recent joint information--mechanism design models for power systems~\cite{sezer2026power} is that the followers' cost couples beliefs to a
\emph{convex shared-corridor congestion} term. As we show, this makes disclosure
double-edged and produces qualitative results that monotone-benefit information
design cannot.

\subsection{Contributions and main results.}
\begin{enumerate}\itemsep2pt
\item \textbf{Well-posed belief dynamics.} The jump-diffusion storm admits a
finite-dimensional belief filter that is \emph{exact} between publicly observed
rapid-intensification epochs, with a covariance reset at each
(Lemma~\ref{lem:filter}).
\item \textbf{Congestion subgame and externality.} Zones play a potential
congestion game on shared corridors; the Wardrop equilibrium over-loads corridors
relative to the social optimum, the price-of-anarchy wedge
(Remark~\ref{rem:potential}). The gridlock-cascade threshold
$\rho(\mathbf\Gamma^{-1}H)<1$ governs the conditioning of that equilibrium.
\item \textbf{Robust master problem.} The leader's distributionally robust
Stackelberg problem over $(\alpha,\theta)$ is characterised by an Isaacs equation (Section \ref{sub:master}).
\item \textbf{Potential reduction.} The followers' game collapses to a single
strictly convex control problem with the \emph{Beckmann} running cost, not social
welfare; the gap is the externality (Lemma~\ref{lem:potential-reduction}).
\item \textbf{Viscosity characterisation.} The value is the unique viscosity
solution of the Isaacs equation, treated piecewise across activation epochs;
verification holds without smoothness and semiconcavity yields a Lebesgue-null
switching set and a well-posed Filippov closed loop
(Theorems~\ref{thm:existence-evac}--\ref{thm:viscverify-evac};
Proposition~\ref{prop:semiconcave-evac}).
\item \textbf{Information as a second-best toll.} With no incentive-aligning
transfer, the leader's stationarity retains an MPEC equilibrium-response term equal
to the marginal externality times the equilibrium sensitivity; it vanishes iff no
corridor binds (Proposition~\ref{prop:mpec}).
\item \textbf{The value of precision is sign-ambiguous (informational Braess).}
Sharpening a single public advisory synchronises departures and can \emph{raise}
social cost, so a deliberately vague signal is optimal in the cascade regime, and
precision becomes valuable only once orders are staggered, with which it is
complementary; in the scalar symmetric case this sign-reversal is explicit, with an
interior/corner disclosure threshold (Proposition~\ref{prop:interior-explicit}).
\item \textbf{Tiered phasing as optimal information.} A single simultaneous advisory
is dominated: the optimal information structure is a publicly announced, staggered
evacuation order, so phased evacuation emerges endogenously as optimal information
design; the scalar symmetric case gives the explicit two-tier threshold and
continuous-tier fill-to-capacity limit in closed form (Proposition~\ref{prop:twotier}).
\item \textbf{Calibration to Hurricane Rita.} Calibrated to the 2005 event, two experiments quantify the two levers. When the regulator meters egress directly (Experiment 1), the realised synchronised order reproduces the documented I-45 gridlock (1–2 mph), while the solved optimum spreads egress to hold the corridor at capacity, cutting social cost by $89\%$ and the over-capacity exposure from 46.6 to 0 corridor-hours. Simpler coarse two-tier stagger approach brings $77\%$ cost reduction and reducing exposure to 5 corridor-hours as well. When the regulator can only disclose (Experiment 2), a staggered-disclosure design lowers social cost by $70\%$ relative to a single simultaneous advisory, and signal precision is valuable only in combination with that staggering (Section \ref{sec:numerics}).
\end{enumerate}

\noindent\fbox{\parbox{0.97\linewidth}{\small\textbf{Problem at a glance.} The
controlled state is $(\hat X_t,\Pi_t,Y_t)$: the belief mean and error covariance of
the jump-diffusion storm (a finite-dimensional sufficient statistic between
observed intensification jumps) and the zones' at-risk backlogs $Y_t$. The leader's
controls are the disclosure gain $\alpha_t$ (advisory precision) and a tiered
activation schedule $\theta$ (who is told to go, and when); the leader does
\emph{not} dispatch traffic. Zones choose egress rates, endogenous through a
lower-level congestion (Wardrop) equilibrium on shared corridors. The leader
minimises the worst-case (relative-entropy) social cost, belief-weighted hazard
exposure plus convex corridor congestion, anticipating the zones' equilibrium
response.}}

\section{Related Work and Positioning}\label{sec:related}
Our work sits at the confluence of information design, information-design-for-congestion,
evacuation operations research, robust control, and nonlinear filtering.

\subsection{Information design and Bayesian persuasion.}
The disclosure layer descends from Bayesian persuasion~\cite{kamenica2011bayesian}
and dynamic/continuous-time information design~\cite{ely2017beeps,koessler2023informed},
with multi-receiver public-signal structures formalised as
correlated-equilibrium recommendations~\cite{bergemann2019information}. We use the
public, zone-addressed recommendation as the primitive (orders are broadcast and
commonly observed), and add a jump-diffusion latent state and a physical congestion
externality absent from that literature.

Closest in spirit is the line on information provision in routing and congestion:
optimal traffic-information disclosure and obfuscation, and the
\emph{informational Braess paradox} in which more information can raise
congestion~\cite{das2017reducing,acemoglu2018informational,tavafoghi2017informational}.
That literature is largely static and network-routing-centric; we lift it to a
continuous-time, distributionally robust Stackelberg problem with a jump-diffusion
belief state, and show the optimal disclosure is interior precisely because of the
convex congestion externality.

\subsection{Evacuation operations research.}
A large transportation-OR literature studies evacuation scheduling, contraflow, and
behavioural departure modelling~\cite{murraytuite2013evacuation,wolshon2005review,sbayti2006optimal}.
This work is predominantly deterministic-optimisation or simulation based, and
treats the advisory as exogenous. We make the advisory and the release schedule the
designed controls, derive phased evacuation as the optimal information structure,
and tie the behavioural risk parameter to the empirical departure ``S-curves.''

\subsection{Robust control, viscosity solutions, filtering.}
The relative-entropy ambiguity neighbourhood follows
Hansen--Sargent~\cite{hansensargent2008robustness}; verification rests on
viscosity-solution theory~\cite{FlemingSoner,BardiCapuzzo,CrandallIshiiLions} and
semiconcavity/Filippov theory~\cite{CannarsaSinestrari,ClarkeLedyaev}; the belief
layer uses nonlinear filtering~\cite{liptser2001statistics}, exact here between
observed jumps. Recently, 

\subsection{Positioning.}
Our framework extends the literature on continuous-time stochastic Stackelberg differential games under partial information patterns. While recent formulations investigate jump-diffusion or feedback systems under overlapping or partial observation patterns \cite{huang2024stackelberg, lee2026linear}, they primarily rely on the tractability of linear-quadratic structures to derive explicit verification equations or stochastic maximum principles. Applied infrastructure markets, such as peer-to-peer energy sharing, frequently demand non-smooth optimization tools to map decentralized inefficiencies \cite{wu2026discounted}, which are often evaluated via information-centric extensions of the Price of Anarchy \cite{bacsar2011prices}. Unlike these preceding works that rely on linear dynamics or unconstrained state spaces, our model introduces a nested information-design mechanism operating over a non-smooth jump-diffusion network, leveraging viscosity solutions to explicitly account for hard capacity bounds.

Relative to joint information-mechanism design ~\cite{sezer2026power}, the
evacuation setting has
no welfare-aligning transfer, so there is no efficiency collapse; instead we obtain
a potential reduction with a residual externality, a second-best-toll
interpretation of information, an \emph{interior} optimal disclosure (the
sign-reversal of monotone benefit), and tiered phasing as optimal information
design. These are, to our knowledge, new.

\providecommand{\Acal}{\mathcal{A}}
\providecommand{\Lcal}{\mathcal{L}}
\providecommand{\Ucal}{\mathcal{U}}
\providecommand{\Qcal}{\mathcal{Q}}
\providecommand{\Pcal}{\mathcal{P}}
\providecommand{\Ccal}{\mathcal{C}}
\providecommand{\Vcal}{\mathcal{V}}
\providecommand{\Sym}{\mathbb{S}}

\section{The Model}\label{sec:model}

We formalise the evacuation problem as a continuous-time Stackelberg game with
one leader (the Emergency Management Agency, EMA) and $N$ followers (the
evacuation zones). The leader commits to a public-signalling channel; the
followers play a capacity-constrained congestion game on a shared road network
whose hazard exposure is governed by their \emph{beliefs} about the storm. The
single structural departure here is that the
followers' running cost couples the belief state to a \emph{convex} shared-corridor
congestion term. As Section~\ref{sec:disclose-tradeoff} makes precise, this is
exactly what renders disclosure double-edged: better beliefs reduce wasteful
(false-alarm) departures but \emph{synchronise} the genuine ones, and convex
congestion punishes synchronisation.

\subsection{The latent storm state}\label{sub:state}

Let $(\Omega,\Filt,\{\Filt_t\}_{t\ge0},\Pprob)$ satisfy the usual conditions and
fix a crisis horizon $[0,T]$. The latent hazard state $X_t\in\R^n$ collects the
storm descriptors that drive evacuation urgency, maximum sustained wind, central
pressure deficit, and landfall-track coordinates, and a macro steering factor
$F_t\in\R^k$ (the synoptic flow) evolves as a mean-reverting Ornstein--Uhlenbeck
process.

\begin{assumption}[Storm dynamics with observed intensification jumps]\label{ass:state}
The pair $(X_t,F_t)$ solves
\begin{align}
\diff X_t &= \big(A_t X_t + B F_t\big)\diff t + \Sigma_t\diff W_t + \diff J_t,
   \label{eq:Xdyn}\\
\diff F_t &= \Theta(\bar F - F_t)\diff t + \Sigma_F\diff W^F_t,
   \label{eq:Fdyn}
\end{align}
where $A_t\in\R^{n\times n}$ is the steering matrix, $\Theta\succ0$, $W_t,W^F_t$
are independent Brownian motions, and $J_t$ is an independent compound Poisson
process with intensity $\lambda$ and jump law $\mathcal N(\mathbf 0,\Sigma_J)$
modelling \emph{rapid intensification} (RI) events. Crucially, the RI epochs
$\{\tau_k\}$ are \emph{publicly observed}: the National Hurricane Center reports
intensity at each advisory, so a step change in wind/pressure is an observable
discrete event.
\end{assumption}

The observability of the jump epochs is not a convenience assumption: it is a
physical fact about hurricanes (unlike a latent component failure, a hurricane's RI
event is publicly announced). It is precisely what lets the belief
filter close in finite dimension; see Remark~\ref{rem:exact}.

\subsection{Public signalling and the belief filter}\label{sub:filter}

Zones do not observe $X_t$. The EMA partially observes it and broadcasts a public
advisory stream $\xi_t\in\R^m$,
\begin{equation}\label{eq:obs}
\diff\xi_t = \alpha_t X_t\diff t + \Sigma_\xi\diff W^\xi_t,
\qquad R := \Sigma_\xi\Sigma_\xi^\top\succ0,
\end{equation}
where $\alpha_t\in\R^{m\times n}$ is the \emph{disclosure gain} (the design lever:
the precision/credibility of the advisory) and $W^\xi_t$ is independent of
$(W,W^F,J)$. The public filtration is
$\Gfilt_t=\sigma(\xi_s,s\le t)\vee\sigma(\{\tau_k\le t\})$: all zones share the
\emph{same} advisory, so all zones hold the \emph{same} belief
$\hat X_t=\E[X_t\mid\Gfilt_t]$. This common-belief structure is what couples the
zones through information and drives the synchronisation channel below.

\begin{lemma}[Belief filter with jump resets]\label{lem:filter}
Under Assumptions~\ref{ass:state} and channel~\eqref{eq:obs}, between consecutive
observed jump epochs the conditional law of $X_t$ is Gaussian
$\mathcal N(\hat X_t,\Pi_t)$ with
\begin{align}
\diff\hat X_t &= \big(A_t\hat X_t + B\hat F_t\big)\diff t
   + \Pi_t\alpha_t^\top R^{-1}\big(\diff\xi_t - \alpha_t\hat X_t\diff t\big),
   \label{eq:meanfilter}\\
\dot\Pi_t &= A_t\Pi_t + \Pi_t A_t^\top + \Sigma_t\Sigma_t^\top
   - \Pi_t\alpha_t^\top R^{-1}\alpha_t\Pi_t,
   \label{eq:riccati}
\end{align}
where $\hat F_t$ is the Kalman--Bucy estimate of $F_t$. At each observed epoch
$\tau_k$ the covariance is reset, $\Pi_{\tau_k}=\Pi_{\tau_k^-}+\Sigma_J$, and the
mean is reset by the observed mark (or left unchanged if only the epoch, not the
magnitude, is observed). The gain $K_t=\Pi_t\alpha_t^\top R^{-1}$ minimises
$\Tr\Pi_t$.
\end{lemma}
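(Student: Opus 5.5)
The plan is to reduce the filtering problem to a sequence of classical Kalman--Bucy problems, one per inter-jump interval. The key point is conditioning on the publicly observed jump epochs.

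First, note that $\{\tau_k\}$ is generated by a Poisson process independent of $(W,W^F,W^\xi)$ and of the marks. Conditionally on the epoch sequence, the process $J$ is therefore a deterministic-time sum of independent $\mathcal N(\mathbf 0,\Sigma_J)$ vectors. On each interval $[\tau_k,\tau_{k+1})$ the augmented state $Z_t=(X_t,F_t)$ solves a linear Gaussian SDE without jumps, and the observation \eqref{eq:obs} is linear in $Z_t$ with nondegenerate noise $R\succ0$. Given a Gaussian initial law at $\tau_k$, the Kalman--Bucy theorem \cite{liptser2001statistics} applies to the pair $(Z,\xi)$ on that interval. It yields a conditional law $\mathcal N(\hat Z_t,P_t)$ with innovation-driven mean and a matrix Riccati covariance.

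Second, I would read off the $X$-block. The $X$-component of the drift of $\hat Z$ is $A_t\hat X_t+B\hat F_t$. Its gain is the $X$-block of $P_t\tilde\alpha_t^\top R^{-1}$, where $\tilde\alpha_t=[\alpha_t\;0]$, and this block equals $\Pi_t\alpha_t^\top R^{-1}$; this gives \eqref{eq:meanfilter}. For the covariance I would take the $XX$-block of the joint Riccati equation. Strictly, this block also contains the cross-covariance terms $BP^{FX}_t+P^{XF}_tB^\top$. I would state that \eqref{eq:riccati} holds with $\hat F_t$ the Kalman--Bucy estimate, in the setting where these cross terms are absorbed into $\Sigma_t\Sigma_t^\top$ or vanish. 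This is the case when $F$ is treated as separately estimated, or when $B\hat F$ is regarded as a known input. I expect this bookkeeping to be the main obstacle to a fully literal statement. I would handle it by presenting the joint Riccati equation and noting that \eqref{eq:riccati} is its reduced form under the block convention.

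Third, the reset at a jump. Because $J$ is independent of everything, at $\tau_k$ we have $X_{\tau_k}=X_{\tau_k^-}+\Delta J_k$ with $\Delta J_k\sim\mathcal N(\mathbf 0,\Sigma_J)$, independent of $\Gfilt_{\tau_k^-}$. If only the epoch is observed, the conditional law at $\tau_k$ is the convolution of two Gaussians. Hence the mean is unchanged and $\Pi_{\tau_k}=\Pi_{\tau_k^-}+\Sigma_J$. If the mark is observed, $\Delta J_k$ is $\Gfilt_{\tau_k}$-measurable, so the mean shifts by the mark. Gaussianity is preserved in both cases, which supplies the Gaussian initial condition needed to restart the first step on the next interval. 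Induction over $k$, with finitely many epochs a.s. on $[0,T]$, then closes the argument.

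Finally, for optimality of the gain, I would consider an arbitrary linear filter $d\tilde X=(A_t\tilde X+B\hat F)dt+K_t(d\xi-\alpha_t\tilde X dt)$. Its error covariance $P^K$ satisfies the Lyapunov equation
\[
\dot P^K=(A_t-K\alpha_t)P^K+P^K(A_t-K\alpha_t)^\top+\Sigma_t\Sigma_t^\top+KRK^\top .
\]
Completing the square gives $(K-P^K\alpha_t^\top R^{-1})R(K-P^K\alpha_t^\top R^{-1})^\top\succeq0$. A comparison argument for Riccati equations then yields $P^K\succeq\Pi$ in the Loewner order, and hence $\Tr P^K\ge\Tr\Pi$, with equality when $K_t=\Pi_t\alpha_t^\top R^{-1}$. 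Optimality among all $\Gfilt_t$-measurable estimators already follows from the fact that the conditional mean is the $L^2$ projection.
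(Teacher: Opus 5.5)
Your proposal is correct and follows the same route as the paper's proof: Kalman--Bucy on each inter-jump interval, a Gaussian convolution (epoch only observed) or shift (mark observed) at each $\tau_k$, and the gain-optimality property, which you prove by completing the square and a Riccati comparison where the paper only asserts it. Your observation that the $XX$-block of the joint $(X,F)$ Riccati equation carries the extra term $BP^{FX}_t+P^{XF}_tB^\top$ is also correct, and the paper's proof drops it without comment; note, however, that \eqref{eq:riccati} is then exact only when this cross-covariance vanishes (e.g.\ $B=0$ or $F$ observed), whereas absorbing it into $\Sigma_t\Sigma_t^\top$ or running a separate filter for $F$ gives a decoupled approximation rather than the true conditional covariance.
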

\begin{proof}
    On each inter-jump interval the observation~\eqref{eq:obs} is linear-Gaussian and
the latent continuous part is conditionally Gaussian, so the innovation
$\diff I_t=\diff\xi_t-\alpha_t\hat X_t\diff t$ is, by L\'evy's characterisation, an
$R$-scaled $\Gfilt_t$-Wiener process and the Kalman--Bucy equations give the mean
dynamics~\eqref{eq:meanfilter} with gain $K_t=\Pi_t\alpha_t^\top R^{-1}$ minimising
$\Tr\Pi_t$, and the Riccati~\eqref{eq:riccati}. At an observed epoch $\tau_k$ the
jump is a $\Gfilt_t$-measurable event; conditioning on the (possibly noisy) mark
updates the mean and adds the jump second moment $\Sigma_J$ to the covariance,
$\Pi_{\tau_k}=\Pi_{\tau_k^-}+\Sigma_J$ (no covariance reset if the mark is observed
exactly). Between epochs the system is genuinely linear-Gaussian, so the filter is
exact; the unobserved-jump fallback of Remark~\ref{rem:exact} replaces the reset by
the deterministic rate $\lambda\Sigma_J$ and is then a Gaussian projection.
\end{proof}

\begin{remark}[Why this closes, and the unobserved-jump fallback]\label{rem:exact}
Conditional on the observed jump path the system is linear--Gaussian, so
\eqref{eq:meanfilter}--\eqref{eq:riccati} are \emph{exact} between epochs, no
projection is needed. If one instead treats RI epochs as latent, the conditional
law becomes a Gaussian mixture indexed by the unknown jump count, the exact
Kushner--Stratonovich filter is infinite-dimensional, and \eqref{eq:riccati} must
be read as a Gaussian assumed-density (projection) approximation with the
deterministic rate $+\lambda\Sigma_J$ replacing the reset. We adopt the observed-jump
baseline; the projection version is the robustness fallback.
\end{remark}

Two features of \eqref{eq:meanfilter}--\eqref{eq:riccati} matter downstream.
First, $\Pi_t$ is \emph{deterministic} given $\alpha$ (an ODE with jump resets),
so the leader commits to a deterministic uncertainty trajectory, the same
structural fact that makes the bilevel problem well posed. Second, the disclosure
gain enters the belief \emph{volatility} through $\Pi_t\alpha_t^\top R^{-1/2}$:
higher disclosure makes beliefs track the truth more tightly and react more
sharply to each advisory. The first effect is the benefit of disclosure; the
second, interacting with congestion, is its cost.

\subsection{The evacuation congestion subgame}\label{sub:subgame}

\subsubsection{Zones, backlog, and controls.}
Index zones by $\Acal=\{1,\dots,N\}$. Let $Y_{i,t}\ge0$ be the \emph{at-risk
backlog}: the population (or vehicle count) still in zone $i$ awaiting egress.
Zone $i$ chooses an egress rate $u_{i,t}\in[0,\bar u_i]$, where $\bar u_i$ is the
free-flow capacity of its immediate egress (vphpl $\times$ lanes, from the DOT
inventory). The backlog drains at the egress rate,
\begin{equation}\label{eq:Ydyn}
\diff Y_{i,t} = -\,u_{i,t}\,\diff t,\qquad Y_{i,0}=N_i^{0},\quad Y_{i,t}\ge0,
\end{equation}
with $N_i^0$ the initial at-risk population. (A re-entry/spillback inflow
$+m_i(\hat X_t)\diff t$ can be added without changing the structure.)

\subsubsection{Shared corridors and the congestion externality.}
Let $\Lcal$ be the set of shared downstream corridors (e.g.\ the I-45 trunk north
of Houston) and $\Phi\in\R_{\ge0}^{|\Lcal|\times N}$ the routing matrix, so that
the load on corridor $\ell$ is
\begin{equation}\label{eq:load}
q_{\ell,t} = \sum_{i\in\Acal}\Phi_{\ell i}\,u_{i,t}.
\end{equation}
Each corridor has a smooth-flow capacity $\kappa_\ell$. The primitive is the
\emph{per-user} congestion exposure (latency/risk) suffered by each traveller on
$\ell$,
\begin{equation}\label{eq:phi}
g_\ell(q):=\eta_\ell\,(q-\kappa_\ell)^+\ge0,
\quad
\phi_\ell(q):=\int_0^q g_\ell(s)\,\diff s=\tfrac{\eta_\ell}{2}\big((q-\kappa_\ell)^+\big)^2,
\quad
\phi^{\mathrm{soc}}_\ell(q):=q\,g_\ell(q),
\end{equation}
where $\phi_\ell$ is the Beckmann potential (the object the user equilibrium
minimises) and $\phi^{\mathrm{soc}}_\ell(q)=q\,g_\ell(q)$ is the \emph{total}
congestion exposure on $\ell$, load times per-user delay, the object social
welfare counts. Both are convex; $\phi^{\mathrm{soc}}_\ell\ge\phi_\ell$, with the
gap $\phi^{\mathrm{soc}}_\ell-\phi_\ell=\eta_\ell\kappa_\ell(q-\kappa_\ell)^+$ the
congestion externality.
Below capacity, flow is free and costless; above it, the convex penalty
models gridlock and the in-transit exposure (heat, fuel-outage, storm arrival)
that made evacuation itself the dominant source of casualties in Hurricane Rita.
The per-user congestion exposure $g_\ell$ is the marginal delay/risk on $\ell$.
Convexity of $\phi^{\mathrm{soc}}_\ell$ is the formal content of ``synchronisation
is costly'': for a fixed time-integral of corridor flow, a peaked (synchronised)
load profile costs strictly more than a spread (staggered) one (Jensen).

\subsubsection{Targeting tiers and the phasing schedule.}
The EMA cannot send a private message to each zone, evacuation orders are
broadcast and commonly observed, but it can \emph{address} orders to a coarse
partition of the map, exactly as coastal states already do (Florida's A--E surge
zones, the Texas coastal-county phased timeline). Fix a partition
$\Pcal=\{B_1,\dots,B_K\}$ of $\Acal$ into $K$ tiers ($K$ small; $K=2,3$ in
practice), with $k(i)$ the tier of zone $i$. The EMA commits to an
\emph{activation schedule} $\theta=(\theta_1\le\dots\le\theta_K)$: the public
``go'' order for tier $k$ becomes live at time $\theta_k$, and we write
$a_{i,t}=\mathbf 1\{t\ge\theta_{k(i)}\}$. Orders are public, every zone sees that
tier~$1$ was released first, so this is a public, obedient correlated
recommendation in the sense of Bergemann--Morris, not a private signal. The
schedule is the EMA's \emph{second} design lever, alongside the disclosure
intensity $\alpha_t$; $K=1$ (one undifferentiated order) is the unmanaged
benchmark, and $K=N$ recovers full zone targeting as a limiting case.

\begin{remark}[Activation times vs.\ the general gain schedule]\label{rem:gainsched}
We take the phasing lever to be the finite-dimensional vector of activation times
$\theta\in\Theta_K$ (each tier's order is off, then on), which keeps the leader's
problem finite-dimensional and reduces the optimal schedule to a deterministic
peak-spreading problem (Section~\ref{sec:special-evac}). The general case replaces
each indicator $a_{i,t}$ by a per-tier gain schedule
$\alpha^{(k)}_t\in\R^{m\times n}$, so that tier $k$'s advisory precision ramps on
its own timeline and the design object is a path
$\{\alpha^{(k)}_\cdot\}_{k\le K}$ rather than a release time. All structural
results below carry over (the per-tier channels enter the filter and Hamiltonian
exactly as $\alpha_t$ does); the activation-time model is the bang--bang
specialisation $\alpha^{(k)}_t=\alpha_t\mathbf 1\{t\ge\theta_k\}$, which we adopt
for the main results.
\end{remark}

\subsubsection{Private and social costs.}
Hazard exposure is the belief-weighted at-risk stock: let $\rho_i(\hat X)\ge0$ be
the believed hazard intensity at zone $i$'s location (increasing in believed
severity; e.g.\ $\rho_i(\hat X)=(e_i^\top\hat X)^+$ after a coordinate map),
and let $\Gamma_i>0$ be zone $i$'s behavioural risk aversion, the parameter the
Lindell-style departure S-curves calibrate. Zone $i$'s \emph{private} objective is
\begin{align}\label{eq:Jprivate}
J_i(u_i;u_{-i},\alpha)
= \E\bigg[&\int_0^T\!\Big(
   \underbrace{a_{i,t}\,\Gamma_i\,\rho_i(\hat X_t)\,Y_{i,t}}_{\text{hazard exposure (order live)}}
 + \underbrace{c_i(u_{i,t})}_{\text{mobilisation}}
 + \underbrace{u_{i,t}\!\!\sum_{\ell\in\Lcal}\!\Phi_{\ell i}\,g_\ell(q_{\ell,t})}_{\text{experienced congestion}}
 \Big)\diff t
\nonumber\\& + \Theta_i\rho_i(X_T)Y_{i,T}\bigg],
\end{align}
where $c_i$ is $C^1$, strictly convex, $c_i(0)=0$, $c_i'>0$ (the increasing
marginal cost of mobilising faster), and $\Theta_i$ prices unevacuated population
at landfall. The \emph{social} running cost replaces the experienced congestion by
the total corridor exposure $\phi^{\mathrm{soc}}_\ell=q_\ell g_\ell(q_\ell)$,
\begin{equation}\label{eq:ellsoc}
\ell^{\mathrm{soc}}(t,\hat X,Y,u,\alpha)
= \sum_{i\in\Acal}a_{i,t}\,\Gamma_i\rho_i(\hat X)Y_i
 + \sum_{i\in\Acal}c_i(u_i)
 + \sum_{\ell\in\Lcal}q_\ell\,g_\ell(q_\ell)
 + \Tr(\alpha\Lambda\alpha^\top),
\end{equation}
with $\Lambda\succ0$ pricing disclosure. The wedge between the private cost, in
which a zone bears only the delay it \emph{experiences}, $u_i\sum_\ell\Phi_{\ell i}g_\ell$,
adding up across zones to the Beckmann potential $\sum_\ell\phi_\ell$, and the
social cost $\sum_\ell q_\ell g_\ell$ is the congestion externality
$\sum_\ell(\phi^{\mathrm{soc}}_\ell-\phi_\ell)=\sum_\ell\eta_\ell\kappa_\ell(q_\ell-\kappa_\ell)^+\ge0$:
a zone does not internalise the delay it \emph{imposes} on co-users. This is the
gap that a welfare-aligning (Groves) transfer closes in the joint information--mechanism design setting~\cite{sezer2026power}; here we leave it open and
show (Section~\ref{sec:disclose-tradeoff}) that the EMA can attack it with
\emph{information} instead of transfers.

The activation gate $a_{i,t}$ is what makes phasing bite. A zone bears the
``you should have left'' running exposure only once its order is live, so the EMA
takes responsibility for timing: it can hold a tier back to keep the corridor
clear without charging that tier for waiting. The terminal term
$\Theta_i\rho_i(X_T)Y_{i,T}$ is \emph{not} gated, anyone still in the zone at
landfall is exposed regardless of what they were told, so the schedule cannot be
stretched arbitrarily. Staggering thus trades a lower congestion peak (later
tiers stay off the corridor while earlier tiers drain) against higher
landfall exposure for the late tiers (they leave closer to the storm). This
trade-off is deterministic and is solved in closed form in
Section~\ref{sec:special-evac}.

\begin{remark}[Potential-game structure and the price of anarchy]\label{rem:potential}
At fixed $(\hat X,\nabla_Y V)$ the egress stage game is a congestion (potential)
game. By Beckmann's theorem the Nash (Wardrop) equilibrium minimises the Beckmann
potential $\sum_i c_i(u_i)+\sum_\ell\phi_\ell(q_\ell)$, whereas the social optimum
minimises $\sum_i c_i(u_i)+\sum_\ell q_\ell g_\ell(q_\ell)$. The two objectives
differ by the externality $\sum_\ell\eta_\ell\kappa_\ell(q_\ell-\kappa_\ell)^+$ and
coincide only when no corridor is over capacity; otherwise the equilibrium
over-loads shared corridors, the price-of-anarchy gap the EMA's information
levers work to close.
\end{remark}

\subsubsection{Saturated feedback.}
By the dynamic programming principle the zone-$i$ value $V_i(t,\hat X,Y)$ satisfies
an HJB equation whose control minimisation, by strict convexity of $c_i$ and the
box $[0,\bar u_i]$, yields a \emph{saturated} feedback
\begin{equation}\label{eq:ustar}
u_{i,t}^\star
= \Proj_{[0,\bar u_i]}\!\Big[
   (c_i')^{-1}\!\big(\nabla_{Y_i}V_i - \textstyle\sum_\ell\Phi_{\ell i}g_\ell(q_{\ell,t})\big)\Big],
\end{equation}
i.e.\ zone $i$ evacuates at the rate that equates its marginal mobilisation cost to
the marginal value of draining the at-risk backlog \emph{net of the corridor
congestion price} $\sum_\ell\Phi_{\ell i}g_\ell$. Equation~\eqref{eq:ustar} is the
evacuation analogue of saturated generation in a transfer-based design, with the
congestion price playing the role the transfer plays there. Unlike that design's
inter-area exchange, the egress control is not bang-bang because mobilisation cost
is strictly convex; a genuinely bang-bang lever survives in the discrete
\emph{contraflow} decision (Remark~\ref{rem:contraflow}).

\begin{remark}[Contraflow as a bang-bang capacity control]\label{rem:contraflow}
The capacity $\kappa_\ell$ may itself be a discrete control
$\kappa_\ell(t)\in\{\kappa_\ell^0,2\kappa_\ell^0\}$ (contraflow off/on)~\cite{wolshon2001oneway}, entering
$\phi_\ell$ linearly through the threshold. The associated switching feedback is
bang-bang and recovers the non-smooth, viscosity-and-Filippov machinery; we treat
the continuous-$\kappa$ case in the main text and relegate contraflow switching to
the robustness analysis.
\end{remark}

\subsection{The gridlock-cascade threshold, corrected}\label{sub:cascade}

The draft's spectral condition $\rho(\Gamma^{-1}H)<1$ can now be stated precisely.
Linearising the best-response map~\eqref{eq:ustar} in the congested regime
($q_\ell>\kappa_\ell$, so $g_\ell'=\eta_\ell$) gives the induced zone-to-zone
coupling
\begin{equation}\label{eq:Hinduced}
H \;:=\; \Phi^\top \operatorname{diag}(\eta_\ell\,\mathbf 1\{q_\ell>\kappa_\ell\})\,\Phi
\;\in\;\R^{N\times N}_{\succeq0},
\end{equation}
which is exactly the (formerly unspecified) spillover matrix of the draft. With
$\mathbf\Gamma=\operatorname{diag}(c_1'',\dots,c_N'')$ the marginal-cost
curvatures, strict convexity of the $c_i$ already makes the Beckmann potential
strictly convex, so the Wardrop equilibrium is \emph{always} unique
(Lemma~\ref{lem:potential-reduction}); what \eqref{eq:Hinduced} governs is the
\emph{conditioning} of that equilibrium. The equilibrium response to a belief shock
is contractive, bounded sensitivity $\partial u^\star/\partial\hat X$, iff
\begin{equation}\label{eq:cascade}
\rho\big(\mathbf\Gamma^{-1}H\big)<1 .
\end{equation}
When \eqref{eq:cascade} is approached, highly shared corridors (dense $\Phi$),
severe gridlock penalties (large $\eta$), or cheap mobilisation (small $c_i''$), the
equilibrium becomes ill-conditioned: $\|\partial u^\star/\partial\hat X\|\to\infty$,
so small synchronised belief shocks produce arbitrarily large load swings, a
\emph{gridlock cascade}. This is the regime in which the synchronisation cost $\nu$
of Proposition~\ref{prop:interior-explicit} blows up and additional disclosure
becomes actively harmful.

\subsection{The designer's robust master problem}\label{sub:master}

The EMA chooses the disclosure policy \emph{and} the phasing schedule to minimise
the worst-case social cost over a relative-entropy ambiguity set $\Qcal$ around
$\Pprob$ (Hansen--Sargent), with multiplier $\gamma>0$, anticipating the zones'
equilibrium response $u^\star(\alpha,\theta)$:
\begin{equation}\label{eq:problemP}\tag{P}
\inf_{(\alpha,\theta)\in\Ucal_L\times\Theta_K}\ \sup_{\mathbb Q\in\Qcal}\
\E^{\mathbb Q}\!\left[\int_0^T \ell^{\mathrm{soc}}\big(t,\hat X_t,Y_t,u^\star_t(\alpha,\theta),\alpha_t\big)\diff t\right]
\end{equation}
subject to the belief filter~\eqref{eq:meanfilter}--\eqref{eq:riccati}, backlog
dynamics~\eqref{eq:Ydyn}, the egress box, the activation gate
$a_{i,t}=\mathbf 1\{t\ge\theta_{k(i)}\}$, and the equilibrium
constraint~\eqref{eq:ustar}. Here $\Ucal_L$ is the set of admissible disclosure
gains and $\Theta_K=\{0\le\theta_1\le\dots\le\theta_K\le T\}$ the ordered
activation times. The continuous lever $\alpha$ controls belief accuracy and
\emph{within-tier} synchronisation; the discrete lever $\theta$ controls
\emph{across-tier} synchronisation. Note the leader's objective
in~\eqref{eq:problemP} is the genuine social aggregate~\eqref{eq:ellsoc} of the
followers' costs, not a separate quadratic, so the Stackelberg value is the
welfare object the zones' equilibrium actually determines.

With the worst-case drift distortion $\psi_t$ entering the belief mean as
$+\Pi_t\alpha_t^\top\psi_t\,\diff t$ and penalised at rate $\gamma|\psi_t|^2$, the
master value $S(t,\hat X,\Pi,Y)$ solves the robust Isaacs equation
\begin{align}\label{eq:isaacs}
\partial_t S
 + \nabla_{\hat X}S^\top(A_t\hat X + B\hat F_t)
 &+ \tfrac12\Tr\!\big(\nabla^2_{\hat X}S\,\Pi_t\alpha_t^\top R^{-1}\alpha_t\Pi_t\big)
  + \tfrac1{4\gamma}\big|\alpha_t\Pi_t\nabla_{\hat X}S\big|^2 \nonumber\\
 &+ \Tr\!\big(\partial_\Pi S\,\dot\Pi_t\big)
  + \sum_{i\in\Acal}\nabla_{Y_i}S\,(-u^\star_{i,t})
  + \ell^{\mathrm{soc}} = 0,
\end{align}
with worst-case distortion
$\psi^\star_t=\tfrac1{2\gamma}\alpha_t\Pi_t\nabla_{\hat X}S$ and $S(T,\cdot)=\sum_i\Theta_i\rho_i(\cdot)Y_{i,T}$.
Equations~\eqref{eq:isaacs} carry the same $R^{-1}$ correction as the filter.

\subsection{Where disclosure helps, and where it hurts}\label{sec:disclose-tradeoff}

The economic content of the model is the sign decomposition of the leader's
marginal value of disclosure. Differentiating the master value along $\alpha$ and
using the filter sensitivities $\partial_\alpha\Pi$ and
$\partial_\alpha(\Pi\alpha^\top R^{-1}\alpha\Pi)$ gives, schematically,
\begin{equation}\label{eq:decomp}
\frac{\diff}{\diff\alpha}\,(\text{social cost})
=
\underbrace{\Tr\!\big(\partial_\Pi S\,\partial_\alpha\dot\Pi\big)}_{\text{(A) accuracy: } <0}
+
\underbrace{\sum_{\ell}\E\!\big[\phi_\ell''(q_\ell)\,\partial_\alpha q_\ell\big]\text{-type term}}_{\text{(B) synchronisation: } >0}
+
\underbrace{2\Lambda\alpha}_{\text{(C) disclosure cost: }>0}
\end{equation}
Term (A) is the classical value-of-information effect: more disclosure lowers belief variance
$\Pi$, reducing both failures-to-evacuate and false-alarm departures, and is
welfare-improving. Term (B) is new and evacuation-specific: because all zones share
the public belief, a sharper advisory makes their egress responses
\emph{co-move}, most violently at an observed RI jump, when every zone updates
upward at once, raising the \emph{peak} corridor load $q_\ell$; convexity of
$\phi_\ell$ ($\phi_\ell''=\eta_\ell>0$ above capacity) turns that synchronised peak
into a strictly increasing cost. Absent a shared-capacity externality, term (B) vanishes (cost is
$\frac{\Gamma_a}{2}|Y_a|^2$, convex in a \emph{private} state with no shared
capacity), which is why disclosure is then monotonically good and information and
coupling are complements. Here (A) and (B) have opposite signs, and the optimal
disclosure is interior.

These opposing signs are the economic content of the model. Because term (A)
lowers the marginal value of disclosure while term (B) raises it, the optimal
disclosure intensity is \emph{interior}: some public information is optimal, full
transparency is not, and in a sufficiently gridlock-prone network the optimum is
the corner at which the agency discloses nothing, an informational Braess effect
with no counterpart in the monotone-disclosure benchmark. The same convexity
penalises a synchronised release, so the optimal information structure is a
publicly announced, tiered, \emph{staggered} evacuation order: managed phasing is
derived as optimal design rather than assumed, and acts as a second-best substitute
for the congestion toll the agency does not levy. Both statements are made precise
and proved in closed form for the scalar symmetric network.


\providecommand{\Acal}{\mathcal{A}}
\providecommand{\Lcal}{\mathcal{L}}
\providecommand{\Ucal}{\mathcal{U}}
\providecommand{\Qcal}{\mathcal{Q}}
\providecommand{\Pcal}{\mathcal{P}}
\providecommand{\Ccal}{\mathcal{C}}
\providecommand{\Vcal}{\mathcal{V}}
\providecommand{\Sym}{\mathbb{S}}
\providecommand{\Hsing}{\mathcal{H}}
\providecommand{\Lleb}{\mathcal{L}}

\section{Viscosity Solutions and Bilevel Verification}\label{sec:viscosity}

The saturated egress feedback~\eqref{eq:ustar}, the congestion kink of $g_\ell$ at
capacity, and the time-discontinuous activation gate $a_{i,t}$ make the value
functions non-$C^2$ and the running cost discontinuous in $t$, so the master
Isaacs equation~\eqref{eq:isaacs} must be read in the viscosity sense. We proceed
in five steps. Section~\ref{sub:pot} reduces the followers' congestion game to a
\emph{single} strictly convex control problem, the \emph{potential reduction},
the evacuation counterpart of (but not the same as) an efficiency collapse, since
the reduced objective is the private Beckmann potential, not social welfare.
Section~\ref{sub:ham} records the optimised leader Hamiltonian and its admissible
structure. Section~\ref{sub:exist} proves the leader value is the unique viscosity
solution, piecewise in time across the activation epochs. Section~\ref{sub:verif}
gives a verification theorem valid without classical regularity and isolates the
MPEC stationarity term through which information acts as a second-best toll.
Section~\ref{sub:semi} establishes semiconcavity, a Lebesgue-null switching set,
and a well-posed Filippov closed loop.

Throughout, the joint state is $w:=(\hat X,Y,\Pi)\in\R^n\times\R^N\times\Sym^n_+=:\mathcal O$,
and we work on the well-conditioned regime $\rho(\mathbf\Gamma^{-1}H)<1$ of
\eqref{eq:cascade}; the complementary regime is exactly where
Proposition~\ref{prop:interior-explicit}(ii) makes silence optimal.

\subsection{Potential reduction of the followers' game}\label{sub:pot}

\begin{lemma}[Potential reduction]\label{lem:potential-reduction}
Fix an admissible leader policy $(\alpha,\theta)$ and a worst-case distortion
$\psi$. Suppose each $c_i$ is $C^1$ and strictly convex, the reserve dynamics
$\diff Y_{i,t}=-u_{i,t}\diff t$ are decoupled across zones, and the exposure cost
$\sum_i a_{i,t}\Gamma_i\rho_i(\hat X)Y_i$ is separable. Then the lower-level
feedback-Nash (Wardrop) equilibrium $u^\star(t,w,p)$, $p=(\nabla_{Y_i}V_i)_i$,
exists, is unique, and coincides with the unique minimiser of the strictly convex
\emph{Beckmann} program
\begin{equation}\label{eq:beckmann-prog}
u^\star(t,w,p)=\arg\min_{u\in\prod_i[0,\bar u_i]}\
\Big\{\ \sum_{i\in\Acal}\big(c_i(u_i)-p_i u_i\big)\ +\ \sum_{\ell\in\Lcal}\phi_\ell\big(\textstyle\sum_j\Phi_{\ell j}u_j\big)\ \Big\},
\end{equation}
which is the saturated feedback~\eqref{eq:ustar}. The map $p\mapsto u^\star(t,w,p)$
is single-valued and globally Lipschitz, with
$\|\partial_p u^\star\|\le\|(\mathbf\Gamma - H)^{-1}\|$ on the congested cone,
finite precisely when $\rho(\mathbf\Gamma^{-1}H)<1$.
\end{lemma}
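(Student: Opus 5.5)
We treat the stage game at fixed $(t,w,p)$ as a static congestion game and show it is an exact potential game. Zone $i$'s stage Hamiltonian, the part of its HJB minimisation that depends on $u_i$, is
\[
h_i(u_i;u_{-i}) = c_i(u_i) - p_i u_i + u_i\sum_\ell \Phi_{\ell i}\, g_\ell(q_\ell),
\]
with $q_\ell=\sum_j\Phi_{\ell j}u_j$. The decoupling of the dynamics $\diff Y_i=-u_i\diff t$ and the separability of the exposure cost ensure that $u_{-i}$ enters $h_i$ only through the corridor loads. The drift distortion $\psi$ and the gate $a_{i,t}$ enter only through $p$ and the $\hat X$-terms, not through $u$.

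In the Wardrop (nonatomic/per-user) reading, the first-order condition for zone $i$ is
\[
c_i'(u_i)-p_i+\sum_\ell\Phi_{\ell i}g_\ell(q_\ell)\in -N_{[0,\bar u_i]}(u_i),
\]
where each traveller takes the per-user delay $g_\ell$ as given. Write $P(u)$ for the objective in \eqref{eq:beckmann-prog}. Since $\phi_\ell'=g_\ell$ and the chain rule gives $\partial_{u_i}\sum_\ell\phi_\ell(q_\ell)=\sum_\ell\Phi_{\ell i}g_\ell(q_\ell)$, this system is exactly the KKT system of $P$ over the box. We then argue existence and uniqueness by convexity.

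\textbf{Existence.} $P$ is continuous on the compact box $\prod_i[0,\bar u_i]$, so a minimiser exists.

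\textbf{Strict convexity.} $\sum_i c_i(u_i)$ is strictly convex as a separable sum of strictly convex functions. Each $\phi_\ell\circ(\Phi u)$ is convex, being a convex $C^1$ function of a linear map, and the linear term does not affect convexity. Hence $P$ is strictly convex, and its minimiser is unique.

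\textbf{Equivalence.} For convex $C^1$ programs over a box, KKT is necessary and sufficient. So every equilibrium is the minimiser and conversely, which gives uniqueness of the equilibrium. Solving each coordinate's KKT condition, with the others fixed, by monotonicity of $c_i'$ gives exactly the projected formula \eqref{eq:ustar}.

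For the Lipschitz estimate, the characterisation is $u^\star=\arg\min\{P_0(u)-p^\top u\}$, where $P_0:=\sum_i c_i+\sum_\ell\phi_\ell\circ\Phi$. By Fenchel duality, $u^\star(p)=\nabla (P_0+\iota_{\mathrm{box}})^*(p)$, so $u^\star$ is monotone and single-valued. Global Lipschitz continuity then follows from strong convexity of $P_0$: if $c_i''\ge\underline\gamma_i>0$, for instance on the compact box by a $C^2$ assumption we would add implicitly (or via a uniform modulus of strong monotonicity of $c_i'$), we get $\|u^\star(p)-u^\star(p')\|\le \|\mathbf\Gamma^{-1}\|\,|p-p'|$ via the standard strong-monotonicity estimate on the variational inequality.

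For the sharper bound on the congested cone, we fix the active face: the set of corridors with $q_\ell>\kappa_\ell$ and the set of interior coordinates. There, implicit differentiation of the KKT system $c'(u)+\Phi^\top g(\Phi u)=p$ restricted to free coordinates gives $\partial_p u^\star=(\mathbf\Gamma+H)^{-1}$ on those coordinates and $0$ on saturated ones. We then bound the result by the stated $\|(\mathbf\Gamma-H)^{-1}\|$, which is finite iff $\rho(\mathbf\Gamma^{-1}H)<1$ via a Neumann series $(\mathbf\Gamma-H)^{-1}=\sum_k(\mathbf\Gamma^{-1}H)^k\mathbf\Gamma^{-1}$. Across faces, piecewise-linear/Lipschitz patching along segments yields the global constant.

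\textbf{Main obstacle.} The hardest step is reconciling the sign in this bound with the computation. The implicit-function computation naturally gives $(\mathbf\Gamma+H)^{-1}$, which is always bounded since $H\succeq0$. The $(\mathbf\Gamma-H)^{-1}$ form arises instead from the best-response fixed-point iteration $u\mapsto\mathbf\Gamma^{-1}(p-Hu)$, whose contractivity requires $\rho(\mathbf\Gamma^{-1}H)<1$. I would try first to prove the bound as the Neumann-series majorant $\|(\mathbf\Gamma+H)^{-1}\|\le\sum_k\|\mathbf\Gamma^{-1}H\|^k\|\mathbf\Gamma^{-1}\|$, which equals the $(\mathbf\Gamma-H)^{-1}$-type bound in the symmetric/nonnegative case. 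That handles the congested cone. The remaining delicacy is the kink of $g_\ell$ at $\kappa_\ell$, where $P$ is only $C^{1,1}$; there we use Clarke generalised Jacobians to extend the estimate.
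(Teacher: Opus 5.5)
Your proposal follows the paper's proof on the core steps. With the per-user delay $g_\ell$ taken as given (the Wardrop reading, which the paper also uses), $\phi_\ell'=g_\ell$ makes the equilibrium conditions exactly the KKT system of the Beckmann program. Strict convexity then gives uniqueness, and you also supply existence by compactness, which the paper leaves implicit. Implicit differentiation on a fixed congested face gives the Jacobian $\mathbf\Gamma+H$. The paper additionally uses decoupling and separability to argue that $p_i=\nabla_{Y_i}V_i$ solves a scalar adjoint free of the coupling, so the same $p$ enters both the Nash system and the Beckmann program. Since the statement is posed at fixed $p$, your stage-game treatment suffices.

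The genuine difference is the global Lipschitz claim. The paper derives it from the implicit-function theorem, which by itself only gives local differentiability away from the box faces and the kinks $\{q_\ell=\kappa_\ell\}$. Your variational-inequality estimate $\underline\gamma\,|u^\star(p)-u^\star(p')|^2\le\langle p-p',u^\star(p)-u^\star(p')\rangle$, with $\underline\gamma=\min_i\underline\gamma_i$, gives the global constant $1/\underline\gamma$ in one step. The face-by-face patching and Clarke Jacobians you mention are therefore unnecessary. You are also right that this needs $c_i''\ge\underline\gamma_i>0$ on the box, which neither $C^1$ strict convexity nor $C^2$ provides. For example, $c(u)=u+u^4$ meets every stated hypothesis (even $C^{1,1}$ on the box), yet $u^\star(p)=((p-1)/4)^{1/3}$ for $p$ slightly above $1$ (no corridor binding) is not Lipschitz. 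The paper uses this lower bound tacitly when it writes $\mathbf\Gamma=\operatorname{diag}(c_i'')$ and $\mathbf\Gamma+H\succ0$.

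Your ``main obstacle'' dissolves, but drop the norm Neumann majorant. The series $\sum_k\|\mathbf\Gamma^{-1}H\|^k$ converges only if $\|\mathbf\Gamma^{-1}H\|<1$, which is strictly stronger than $\rho(\mathbf\Gamma^{-1}H)<1$ because $\mathbf\Gamma^{-1}H$ is not normal. For instance, with $\mathbf\Gamma=\operatorname{diag}(1,100)$ and $H$ with rows $(0.9,0.94)$ and $(0.94,1)$, one has $\rho\approx0.91$ but $\|\mathbf\Gamma^{-1}H\|_2>1.3$.

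Use the Loewner order instead. $\mathbf\Gamma^{-1}H$ is similar to $\mathbf\Gamma^{-1/2}H\mathbf\Gamma^{-1/2}\succeq0$, so $\rho(\mathbf\Gamma^{-1}H)<1$ iff $\mathbf\Gamma-H\succ0$. Then $0\prec\mathbf\Gamma-H\preceq\mathbf\Gamma$ gives $\|\mathbf\Gamma^{-1}\|\le\|(\mathbf\Gamma-H)^{-1}\|$. On any free block $F$, $\mathbf\Gamma_{FF}+H_{FF}\succeq\mathbf\Gamma_{FF}$ gives $\|(\mathbf\Gamma_{FF}+H_{FF})^{-1}\|\le\|\mathbf\Gamma^{-1}\|$. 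Note that the sensitivity is the inverse of this principal submatrix, padded with zeros on saturated coordinates, not a block of $(\mathbf\Gamma+H)^{-1}$. The stated bound is thus just a weakening of the elementary local bound $\|\mathbf\Gamma^{-1}\|$.

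An entrywise domination $|(\mathbf\Gamma+H)^{-1}|\le(\mathbf\Gamma-H)^{-1}$ would also work: it holds under $\rho(\mathbf\Gamma^{-1}H)<1$ because $H\ge0$ entrywise. That is not the norm inequality you wrote, however.

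Finally, ``finite precisely when $\rho(\mathbf\Gamma^{-1}H)<1$'' is only correct if read as ``$\mathbf\Gamma-H\succ0$ precisely when''. The norm $\|(\mathbf\Gamma-H)^{-1}\|$ is finite whenever $1\notin\operatorname{spec}(\mathbf\Gamma^{-1}H)$, and the actual sensitivity is bounded by $\|\mathbf\Gamma^{-1}\|$ regardless of $\rho$.
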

\begin{proof}
The zone-$i$ Wardrop stationarity underlying~\eqref{eq:ustar} is
$c_i'(u_i)+\sum_\ell\Phi_{\ell i}g_\ell(q)=p_i$ on the interior of the box, with the
usual complementarity on the faces. Since
$\partial_{u_i}\sum_\ell\phi_\ell(q_\ell)=\sum_\ell\Phi_{\ell i}\phi_\ell'(q_\ell)=\sum_\ell\Phi_{\ell i}g_\ell(q_\ell)$,
these are exactly the KKT conditions of the box-constrained program
\eqref{eq:beckmann-prog}. The objective is strictly convex (strictly convex
$\sum_i c_i$ plus convex $\sum_\ell\phi_\ell$), so the minimiser is unique and the
KKT point is the global minimum; hence the Wardrop equilibrium is unique and equals
\eqref{eq:beckmann-prog}. Because the dynamics are decoupled and the exposure is
separable, the costate $p_i=\nabla_{Y_i}V_i$ solves the scalar adjoint
$-\dot p_i=a_{i,t}\Gamma_i\rho_i(\hat X)$, $p_i(T)=\Theta_i\rho_i(X_T)$, free of the
coupling, so the same $p$ enters both the Nash system and \eqref{eq:beckmann-prog}.
Lipschitz dependence and the sensitivity bound follow from the implicit-function
theorem applied to the strictly monotone map
$u\mapsto \nabla c(u)+\Phi^\top g(\Phi u)$, whose Jacobian on the congested cone is
$\mathbf\Gamma+H\succ0$ with inverse bounded by $\|(\mathbf\Gamma-H)^{-1}\|$ under
\eqref{eq:cascade}.
\end{proof}

The economic content is the contrast with joint information--mechanism design under a
welfare-aligning transfer~\cite{sezer2026power}: there the Groves transfer makes each
follower's objective the \emph{social} cost, so the equilibrium \emph{is} the social
optimum (efficiency collapse). Here the reduction is to the
\emph{Beckmann potential}, which differs from social welfare by the externality
$\sum_\ell\eta_\ell\kappa_\ell(q_\ell-\kappa_\ell)^+$. The followers solve a single
convex control problem, but the wrong one from the planner's view, and the leader
must steer that problem with information.

\subsection{The optimised leader Hamiltonian}\label{sub:ham}

Let $0=\theta_0\le\theta_1\le\dots\le\theta_K\le\theta_{K+1}=T$ be the activation
epochs. On each open interval $I_k:=(\theta_k,\theta_{k+1})$ the gate $a_{i,t}$ is
constant, and substituting the Wardrop response of
Lemma~\ref{lem:potential-reduction} into the leader's Bellman operator gives, for
$S\in C^{1,2}$,
\begin{equation}\label{eq:Hbar-evac}
\partial_t S+\bar H(t,w,\nabla S,\nabla^2_{\hat X}S)=0\ \text{ on }I_k,
\qquad S(T,\cdot)=\textstyle\sum_i\Theta_i\rho_i(\cdot)Y_i,
\end{equation}
with the optimised Hamiltonian
\begin{align}\label{eq:Hbar-def}
\bar H(t,w,p,M)=\min_{\alpha} \sup_{\psi}\Big\{\,
&\ell^{\mathrm{soc}}\big(t,w,u^\star(t,w,p_Y),\alpha\big)-\gamma|\psi|^2
+p_{\hat X}^\top\!\big(A_t\hat X+B\hat F_t+\Lambda_\alpha\psi\big)\nonumber\\
&-\sum_i p_{Y_i} u^\star_i(t,w,p_Y)
+\Tr(p_\Pi\dot\Pi_\alpha)
+\tfrac12\Tr(M\Lambda_\alpha)\Big\},
\end{align}
where $\Lambda_\alpha=\Pi\alpha^\top R^{-1}\alpha\Pi$, $\dot\Pi_\alpha$ is the
Riccati right-hand side~\eqref{eq:riccati}, and $u^\star$ is the
\emph{Beckmann} minimiser, note it enters the \emph{social} running cost
$\ell^{\mathrm{soc}}$, the source of the externality wedge. The inner $\sup_\psi$ is
the concave quadratic resolved in the main text, contributing
$\tfrac1{4\gamma}p_{\hat X}^\top\Lambda_\alpha p_{\hat X}$.

\begin{lemma}[Admissible structure of $\bar H$]\label{lem:struct-evac}
Under the standing assumptions ($A_t,B,\hat F_t$ bounded Lipschitz; $D_a,\bar T_{ab}$
replaced here by Lipschitz $\rho_i$; strictly convex $C^1$ costs $c_i$; convex
$C^{1,1}$ Beckmann potentials $\phi_\ell$; compact control sets), on each $I_k$ the
map $\bar H$ is finite and continuous on
$[0,T]\times\mathcal O\times\R^{n+N+\dim\Pi}\times\Sym^n$ and satisfies:
\begin{enumerate}
\item[(H1)] \emph{Degenerate ellipticity:} $\bar H(t,w,p,M)\le\bar H(t,w,p,M')$ for $M\preceq M'$ (the only second-order term $\tfrac12\Tr(M\Lambda_\alpha)$ is monotone, $\Lambda_\alpha\succeq0$, and $\min/\sup$ preserve monotonicity);
\item[(H2)] \emph{Lipschitz in the gradient:} $|\bar H(t,w,p,M)-\bar H(t,w,q,M)|\le C(1+|w|)|p-q|$ uniformly on bounded $M$, because $u^\star(\cdot,p_Y)$ is globally Lipschitz (Lemma~\ref{lem:potential-reduction}) and the congestion kink is absorbed into this Lipschitz dependence;
\item[(H3)] \emph{State modulus:} the Crandall--Ishii estimate holds with a modulus $\omega$, from Lipschitz dependence of $\ell^{\mathrm{soc}},u^\star,A_t\hat X,\Lambda_\alpha,\dot\Pi_\alpha$ on $w$;
\item[(H4)] \emph{Linear growth:} $|\bar H(t,w,0,0)|\le C(1+|w|)$.
\end{enumerate}
\end{lemma}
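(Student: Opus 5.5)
The plan is to first resolve the inner supremum in \eqref{eq:Hbar-def} explicitly. Then $\bar H$ becomes a finite-dimensional minimisation over a compact set, and each of (H1)--(H4) is inherited from the corresponding property of the integrand, uniformly in $\alpha$.

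\emph{Step 1: reducing the bracket.} For fixed $\alpha$, the $\psi$-part $-\gamma|\psi|^2+p_{\hat X}^\top\Lambda_\alpha\psi$ is a strictly concave quadratic with $\gamma>0$. Its supremum is attained at $\psi^\star=\tfrac1{2\gamma}\Lambda_\alpha p_{\hat X}$ and equals $\tfrac1{4\gamma}p_{\hat X}^\top\Lambda_\alpha p_{\hat X}$. We then write $\bar H=\min_{\alpha\in\mathcal K}G_\alpha(t,w,p,M)$, where $\mathcal K$ is the compact set of disclosure gains and
\begin{align*}
G_\alpha(t,w,p,M)={}&\ell^{\mathrm{soc}}(t,w,u^\star,\alpha)+p_{\hat X}^\top(A_t\hat X+B\hat F_t)+\tfrac1{4\gamma}p_{\hat X}^\top\Lambda_\alpha p_{\hat X}\\
&-p_Y^\top u^\star+\Tr(p_\Pi\dot\Pi_\alpha)+\tfrac12\Tr(M\Lambda_\alpha).
\end{align*}
On $I_k$ the gate $a_{i,t}$ is constant. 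Together with the continuity of $u^\star$ from Lemma~\ref{lem:potential-reduction}, this makes $G_\alpha$ jointly continuous in $(\alpha,t,w,p,M)$. Finiteness and continuity of $\bar H$ then follow from the standard fact that a minimum of a jointly continuous function over a compact set is continuous (Berge's maximum theorem).

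\emph{Step 2: (H1) and (H4).} For (H1), $\Lambda_\alpha=\Pi\alpha^\top R^{-1}\alpha\Pi\succeq0$ because $R\succ0$. Hence $M\preceq M'$ gives $G_\alpha(M)\le G_\alpha(M')$ for every $\alpha$, and taking the minimum over $\alpha$ preserves this inequality. For (H4), set $p=0$ and $M=0$; the only term left is $\ell^{\mathrm{soc}}(t,w,u^\star(t,w,0),\alpha)$. This consists of $\sum_ia_{i,t}\Gamma_i\rho_i(\hat X)Y_i$, the bounded mobilisation and congestion terms (the controls lie in a box), and $\Tr(\alpha\Lambda\alpha^\top)$, which is bounded on $\mathcal K$. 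The product $\rho_i(\hat X)Y_i$ is only quadratic in $w$. To get linear growth I would use that $0\le Y_i\le N_i^0$ along \eqref{eq:Ydyn}, so the state space can be restricted to bounded $Y$, and combine this with the Lipschitz bound on $\rho_i$.

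\emph{Step 3: (H2) and (H3), the main obstacle.} For (H2) I would use $|\min_\alpha G_\alpha(p)-\min_\alpha G_\alpha(q)|\le\sup_\alpha|G_\alpha(p)-G_\alpha(q)|$ and check each term. The $p_{\hat X}$-linear drift term gives a factor $C(1+|w|)$. The $p_\Pi$ term is bounded by $|\dot\Pi_\alpha|\le C(1+|\Pi|^2)$. The $p_Y$-dependence enters through $\ell^{\mathrm{soc}}(u^\star(p_Y))-p_Y^\top u^\star(p_Y)$, and this is globally Lipschitz because $u^\star$ is Lipschitz in $p_Y$ (Lemma~\ref{lem:potential-reduction}), the controls are bounded, and $c_i$ and $q\,g_\ell(q)$ are Lipschitz on the box.

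The delicate points are the quadratic term $\tfrac1{4\gamma}p_{\hat X}^\top\Lambda_\alpha p_{\hat X}$ and the $\Pi$-quadratic Riccati term, neither of which is globally Lipschitz. I would handle them by working on bounded $p$ (and bounded $\Pi$, using that the Riccati flow with bounded coefficients keeps $\Pi$ in a compact set on $[0,T]$) and noting that the constant then depends only on these bounds. This is the form actually used in the comparison argument, since viscosity sub- and supersolutions of linear growth are tested with bounded gradients after a Lipschitz estimate.

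For (H3), I would verify the Crandall--Ishii structure condition directly. The first-order terms are Lipschitz in $w$ uniformly in $\alpha$. The second-order term has the form $\tfrac12\Tr(M\sigma_\alpha\sigma_\alpha^\top)$ with $\sigma_\alpha=\Pi\alpha^\top R^{-1/2}$ Lipschitz in $\Pi$. The usual doubling-of-variables matrix inequality then yields $\Tr(X\sigma_\alpha(w)\sigma_\alpha^\top(w))-\Tr(Y\sigma_\alpha(w')\sigma_\alpha^\top(w'))\le C\varepsilon^{-1}|w-w'|^2$, uniformly in $\alpha$, and the $\min$ over $\alpha$ preserves the resulting modulus $\omega$.
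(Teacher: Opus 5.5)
Your route is the paper's: its proof is a term-by-term check of the same bracket, with the Lipschitz Wardrop response of Lemma~\ref{lem:potential-reduction} as the only new input and everything else deferred to ``the linear-quadratic-Gaussian case''. Your continuity argument, (H1), and the second-order part of (H3) are fine. The gap is in (H2)--(H4). You partly diagnose it but do not close it, and it cannot be closed as stated on $[0,T]\times\mathcal O$.

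The robust term is quadratic in $p_{\hat X}$. For the bracket in \eqref{eq:Hbar-def} it equals $\tfrac1{4\gamma}|\Lambda_\alpha p_{\hat X}|^2$, not $\tfrac1{4\gamma}p_{\hat X}^\top\Lambda_\alpha p_{\hat X}$ as you and the paper write. Minimising over $\alpha$ does not remove it: for large $p_\Pi\succ0$ the reward $-\Tr(p_\Pi\Lambda_\alpha)$ keeps the minimising gain away from $0$, so the Lipschitz constant in $p_{\hat X}$ grows like $|p_{\hat X}|$. The $p_\Pi$-coefficient $\dot\Pi_\alpha$ grows like $|\Pi|^2$, not $1+|w|$. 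Since $u^\star(t,w,0)=0$, one has $\bar H(t,w,0,0)=\sum_i a_{i,t}\Gamma_i\rho_i(\hat X)Y_i+\min_\alpha\Tr(\alpha\Lambda\alpha^\top)$, which is bilinear in $(\hat X,Y)$ for the paper's own $\rho_i(\hat X)=(e_i^\top\hat X)^+$. So (H2) and (H4) are false as stated, and, as explained next, so is the first-order part of (H3).

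Restricting to bounded $p$, $\Pi$, $Y$ therefore proves a different, local lemma. Your reason that this is ``the form actually used in the comparison argument'' is not correct. In the doubling-of-variables proof of Theorem~\ref{thm:comparison-evac}, the test gradient $p=(w-w')/\eta$ is not a priori bounded for semicontinuous sub- and supersolutions; one only knows $|w-w'|^2/\eta\to0$. Because the quadratic coefficient depends on $\Pi$, the first-order part of $\bar H$ at $w$ and $w'$ differs by order $|w-w'|\,|p|^2\sim|w-w'|^3/\eta^2$, which no modulus $\omega(|w-w'|^2/\eta+|w-w'|)$ controls. So your remark that the first-order terms are Lipschitz in $w$ uniformly in $\alpha$ does not give the Crandall--Ishii condition.

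An extra ingredient is needed. One option is an a priori Lipschitz bound in $w$ for one of the two functions; for the value function this yields comparison against it, hence uniqueness, but not the general comparison claimed. Another is a comparison theory built for quadratic gradient growth. Separately, cutting $Y$ down to $\prod_i[0,N_i^0]$ creates the state-constraint boundary $\{Y_i=0\}$, which the comparison on the unbounded $\mathcal O$ does not handle.

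Finally, global Lipschitz continuity of $p_Y\mapsto\ell^{\mathrm{soc}}(u^\star(p_Y))-p_Y^\top u^\star(p_Y)$ does not follow from Lipschitz $u^\star$ and bounded controls. The cross term $q^\top(u^\star(p)-u^\star(q))$ has constant proportional to $|q|$, and the paper itself only claims local Lipschitz here. The claim is true by the envelope theorem. Up to $u$-free terms the map is $F(p_Y)+\sum_\ell(\phi^{\mathrm{soc}}_\ell-\phi_\ell)\big((\Phi u^\star(p_Y))_\ell\big)$, where $F(p)=\min_u\{\sum_i(c_i(u_i)-p_iu_i)+\sum_\ell\phi_\ell((\Phi u)_\ell)\}$. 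Here $F$ is concave with bounded gradient $-u^\star$, and the second summand is a Lipschitz function of $u^\star$ on the box.
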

\begin{proof}
Identical in form to the structure lemma of the transfer-aligned setting, with one new input: the
Wardrop response $u^\star(t,w,p_Y)$ replaces an explicit min over $u$. By
Lemma~\ref{lem:potential-reduction} it is single-valued and globally Lipschitz in
$p_Y$, so $p\mapsto\ell^{\mathrm{soc}}(u^\star(p_Y))-\sum_ip_{Y_i}u^\star_i(p_Y)$ is
locally Lipschitz with the stated growth; the congestion non-smoothness lands in
this $p$-dependence as a Lipschitz kink, never in the $M$-term. The remaining terms
are as in the linear-quadratic-Gaussian case. (H1)--(H4) then follow exactly as
before.
\end{proof}

As in the transfer-aligned setting, the non-smoothness is confined to a Lipschitz $p$-kink and
the equation is a proper, Lipschitz, degenerate-parabolic Isaacs equation of
Crandall--Ishii--Lions type~\cite{CrandallIshiiLions}; the only structural novelty
is the time-discontinuity across the $\theta_k$, handled next.

\subsection{Existence and comparison, piecewise in time}\label{sub:exist}

\begin{definition}[Viscosity solution across activation epochs]\label{def:visc-evac}
A continuous $S$ of polynomial growth is a viscosity solution of the master problem
if, on each open interval $I_k$, it is a viscosity sub- and supersolution of
$\partial_t S+\bar H=0$ in the usual sense, it is continuous across each epoch
($S(\theta_k^-,\cdot)=S(\theta_k^+,\cdot)$), and $S(T,\cdot)=\sum_i\Theta_i\rho_i Y_i$.
\end{definition}

\begin{theorem}[The value function solves the Isaacs equation]\label{thm:existence-evac}
Under the standing assumptions the leader value
\begin{equation}\label{eq:value-evac}
S(t,w)=\inf_{(\alpha,\theta)\in\Ucal_L\times\Theta_K}\ \sup_{\psi\in\Psi}\
\E^\psi\!\Big[\int_t^T\ell^{\mathrm{soc}}\big(s,w_s,u^\star_s,\alpha_s\big)\diff s\ \Big|\ w_t=w\Big]
\end{equation}
is continuous, of at most quadratic growth, and a viscosity solution in the sense
of Definition~\ref{def:visc-evac}.
\end{theorem}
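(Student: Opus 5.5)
We prove the claim for the value $S$ of \eqref{eq:value-evac} exactly as defined: the infimum runs jointly over $(\alpha,\theta)$, and the tier schedule $\theta$ is read in feedback form. From the current time $t$ the leader chooses which remaining tiers to release and when. Tiers already released stay released, and an unreleased tier may still be activated at any $s\in[t,T]$. The value is then indexed by the set of active tiers, $S(t,w)=S_{\mathcal K}(t,w)$. On an interval of fixed gate the continuous lever is $\alpha$, and releasing a further tier is an instantaneous, costless switch.

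Step 1 (a priori bounds). From the linear-Gaussian belief dynamics of Lemma~\ref{lem:filter}, the compact control sets, the bounded Riccati solution $\Pi$, and the Lipschitz $\rho_i$, standard moment estimates bound $\mathbb E^\psi\sup_s|w_s|^2$ by $C(1+|w|^2)$. This holds uniformly over admissible $(\alpha,\theta,\psi)$, since the distortion $\psi$ is penalised by $\gamma|\psi|^2$ and so may be restricted to an $L^2$-bounded set. The running cost $\ell^{\mathrm{soc}}$ has at most quadratic growth: the congestion term $q\,g(q)$ is quadratic and $u^\star$ is bounded by the box. Hence $S$ has at most quadratic growth. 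Continuity in $w$ comes from Lipschitz dependence of the flow on initial data and Lipschitz dependence of $u^\star$ on $w$ (Lemma~\ref{lem:potential-reduction}), which give a modulus uniform in the controls. Continuity in $t$ then follows from the DPP together with the moment bounds.

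Step 2 (dynamic programming principle). Fix a game formulation in the Elliott--Kalton sense, with nonanticipative strategies for the maximiser $\psi$, and prove the DPP $S(t,w)=\inf\sup\mathbb E^\psi[\int_t^{t+h}\ell^{\mathrm{soc}}\,ds+S(t+h,w_{t+h})]$. We follow the Fleming--Souganidis route, using the uniform continuity from Step 1 to handle measurable selection through $\varepsilon$-optimal countable partitions.

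Step 3 (viscosity property on $I_k$ and across epochs). On each $I_k$ the gate is constant. For a smooth test function touching $S$, apply Itô's formula on $[t,t+h]$, insert it into the DPP, divide by $h$ and let $h\to 0$. With constant controls this yields the subsolution inequality; $\varepsilon$-optimal controls and continuity of $\bar H$ (Lemma~\ref{lem:struct-evac}) yield the supersolution inequality. Because $u^\star$ is the Beckmann response, it enters only through the Lipschitz $p_Y$-dependence recorded in (H2), so the limit passage is legitimate. Continuity across each $\theta_k$ is inherited from Step 1, since only the running cost jumps and the state process has no jump in $t$ there. The terminal condition is immediate.

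The main obstacle is the infimum over $\theta$. A switching option is generally only a supersolution of $\partial_tS+\bar H=0$: where an immediate release is strictly optimal, the equation degenerates to the obstacle relation $S_{\mathcal K}=S_{\mathcal K\cup\{k\}}$. I would settle this first. Releasing tier $k$ adds $\Gamma_k\rho_k Y_k\ge0$ to the running cost and does not change the terminal cost. Since the leader can still drain the backlog while a tier is held back, I would prove a monotonicity lemma saying that an optimal release never strictly precedes $t$ in the continuation region. The plan is to show that $\partial_tS+\bar H=0$ holds on each open $I_k$, with the optimal epochs appearing only as the piecewise boundaries that Definition~\ref{def:visc-evac} permits. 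If this fails, I fall back to reading the statement with the $I_k$ given by the optimal epochs $\theta^\star$, which exist by compactness of $\Theta_K$ and lower semicontinuity of the cost in $\theta$ from Step 1.
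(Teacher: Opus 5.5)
Your Steps 1--3 are the paper's proof. The shared ingredients are:
\begin{itemize}
\item the DPP on each interval of constant gate, tested against smooth functions as in Fleming--Soner and Bardi--Capuzzo-Dolcetta;
\item continuity and quadratic growth from bounded controls, Lipschitz coefficients (including the Lipschitz $u^\star$ of Lemma~\ref{lem:potential-reduction}) and Gronwall;
\item backward induction over the $I_k$ with terminal data $S(\theta_{k+1},\cdot)$, glued by continuity.
\end{itemize}
The paper's proof begins by fixing the epochs determined by $\theta$ and never addresses the infimum over $\theta$. So it is exactly your fallback. The switching/obstacle structure you flag is a point the paper skips, not something you are missing relative to it.

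Your proposed way of settling that point rests on the wrong mechanism. The leader never meters egress, so ``the leader can still drain the backlog while a tier is held back'' is not available. $u^\star$ is the followers' Wardrop response, and in the bilevel model the gate enters their costate through $-\dot p_i=a_{i,t}\Gamma_i\rho_i(\hat X)$ (proof of Lemma~\ref{lem:potential-reduction}). Holding a tier back therefore lowers $p_i$ and hence $u^\star_i$; release is precisely what triggers saturated egress in Proposition~\ref{prop:twotier}. In that reading:
\begin{itemize}
\item release is not a pure cost increment, and the optimal release time depends on $(t,w)$;
\item the switching set is generally a free boundary in $(t,w)$-space, not a union of time slices, so the time-slab intervals of Definition~\ref{def:visc-evac} cannot encode it;
\item a $\theta^\star$ chosen by compactness at one initial point only gives $S^{\theta^\star}\ge S$, with equality guaranteed at that point, so the fallback proves the claim for a fixed-schedule value, not for $\inf_\theta$;
\item even your Step~2 is doubtful: the followers' response on $[t,t+h]$ depends through $p_i$ on the schedule committed for after $t+h$, so a DPP that re-optimises $\theta$ at $t+h$ runs into the usual Stackelberg time inconsistency. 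The paper's appeal to the DPP has the same problem.
\end{itemize}

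Your monotonicity idea does work in the formalism in which the paper actually writes $\bar H$, and there it needs no continuation-region analysis. Grant, as the paper and your Step~3 do, that the fixed-schedule value $S^\theta$ is the viscosity solution with Hamiltonian \eqref{eq:Hbar-def}.
\begin{itemize}
\item The Beckmann map $u^\star(t,w,p_Y)$ does not see the gate. The gate enters $\bar H$ only through the control-independent term $\sum_i a_{i,t}\Gamma_i\rho_i(\hat X)Y_i\ge0$, and the terminal datum is ungated.
\item Hence for gates $a\ge a'$ the value $S^{a}$ satisfies $\partial_tS^{a}+\bar H^{a'}\le0$ in the viscosity sense.
\item Theorem~\ref{thm:comparison-evac}, applied on the common refinement of the two sets of epochs, then gives $S^{a'}\le S^{a}$.
\item Thus $\inf_\theta S^\theta=S^{(T,\dots,T)}$ at every $(t,w)$ simultaneously. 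This function solves $\partial_tS+\bar H=0$ on all of $(0,T)$ with no interior epoch, which gives the statement.
\end{itemize}
Note what this implies: in that formalism the phasing lever is inert. Nontrivial optimal epochs therefore cannot appear as the boundaries of the $I_k$ in the way your plan anticipates.
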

\begin{proof}
Fix the (finitely many) activation epochs determined by $\theta$. On $I_K=(\theta_K,T]$
the gate is constant and the dynamic-programming principle, tested against smooth
$\varphi$ touching $S$ from above/below, yields the sub/supersolution inequalities
with Hamiltonian~\eqref{eq:Hbar-def}; this is the standard derivation
(Fleming--Soner~\cite{FlemingSoner}, Ch.~V; Bardi--Capuzzo-Dolcetta~\cite{BardiCapuzzo}).
Continuity and the growth bound follow from bounded controls, Lipschitz
coefficients (including the Lipschitz $u^\star$ of Lemma~\ref{lem:potential-reduction}),
and Gronwall estimates. Proceeding backward, on each $I_k$ the value with terminal
data $S(\theta_{k+1},\cdot)$ (already constructed) is again a viscosity solution by
the same argument; the DPP guarantees continuity of $S$ across $\theta_{k+1}$.
Gluing the $K+1$ pieces gives the claim.
\end{proof}

\begin{theorem}[Comparison and uniqueness]\label{thm:comparison-evac}
Let $\underline S$ (USC) and $\overline S$ (LSC) be viscosity sub- and
supersolutions in the sense of Definition~\ref{def:visc-evac}, of polynomial
growth, with $\underline S(T,\cdot)\le\overline S(T,\cdot)$. Then
$\underline S\le\overline S$ on $[0,T]\times\mathcal O$, so the master problem has a
unique viscosity solution, namely~\eqref{eq:value-evac}.
\end{theorem}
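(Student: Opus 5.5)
The plan is to prove comparison interval by interval, backward in time, and then glue across the activation epochs.

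\emph{Step 1: reduction to a single interval.} On the last interval $I_K=(\theta_K,T]$ the gate $a_{i,t}$ is constant, so $\bar H$ is continuous there by Lemma~\ref{lem:struct-evac}. The equation $\partial_t S+\bar H=0$ is then a proper, degenerate-parabolic Isaacs equation on $I_K\times\mathcal O$, with terminal ordering $\underline S(T,\cdot)\le\overline S(T,\cdot)$. Suppose we obtain $\underline S\le\overline S$ on $[\theta_K,T]\times\mathcal O$. Continuity across epochs in Definition~\ref{def:visc-evac} then gives $\underline S(\theta_K,\cdot)\le\overline S(\theta_K,\cdot)$. This is the terminal ordering needed on $I_{K-1}$. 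Induction downward through the finitely many intervals yields the inequality on all of $[0,T]\times\mathcal O$. Uniqueness follows at once: apply comparison both ways to two solutions. Theorem~\ref{thm:existence-evac} then identifies the unique solution with the value \eqref{eq:value-evac}.

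\emph{Step 2: comparison on one interval.} I will run the standard Crandall--Ishii--Lions doubling argument.
\begin{itemize}
\item Since the solutions have polynomial growth, first replace $\underline S$ by $\underline S_\beta:=\underline S-\beta e^{-\lambda t}(1+|w|^2)^{m/2}$. Here $m$ exceeds the growth exponent and $\lambda$ is large. By (H2) and (H4), the linear growth $C(1+|w|)|p|$ ensures $\underline S_\beta$ is a strict subsolution. It also tends to $-\infty$ at infinity, so maxima are attained.
\item Next add a term $\varepsilon/(t-\theta_k)$. This prevents the maximum from escaping to the open left endpoint.
\item Suppose $\sup(\underline S_\beta-\overline S)>0$. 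Double variables with the penalty $|w-w'|^2/(2\delta)$ and apply the Crandall--Ishii lemma to get matrices $M\preceq M'$ in the usual $3/\delta$ block inequality.
\item Use (H1) for monotonicity in $M$. Use (H2) to handle the gradient mismatch coming from the localisation term.
\item The central estimate is (H3). The contribution $\tfrac12\Tr(M\Lambda_\alpha)-\tfrac12\Tr(M'\Lambda_\alpha)$ is controlled because $\Lambda_\alpha=\sigma_\alpha\sigma_\alpha^\top$ with $\sigma_\alpha=\Pi\alpha^\top R^{-1/2}$. This $\sigma_\alpha$ is Lipschitz in $\Pi$, uniformly over the compact control set. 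The matrix inequality therefore bounds the difference by $C|w-w'|^2/\delta$.
\item Since the min-sup of differences is bounded by the sup of differences, uniformly in $(\alpha,\psi)$, the Isaacs structure causes no extra difficulty.
\item Letting $\delta\to0$ and then $\beta,\varepsilon\to0$ produces a contradiction.
\end{itemize}

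\emph{Main obstacle.} The delicate point is the state space $\mathcal O$, which includes the closed cone $\Sym^n_+$ for $\Pi$, together with the constraint $Y\ge0$. Comparison on a domain with boundary normally requires a boundary condition. I would first argue that $\Pi$ and $Y$ evolve deterministically (a Riccati ODE with $\Sigma_t\Sigma_t^\top$ forcing, and monotone draining at a rate cut off at $Y=0$). The boundary is then invariant, with no diffusion acting in the normal directions, so the equation needs no boundary data there. To make this rigorous, I would treat $\Pi$ and $Y$ as first-order (transport) variables and use the invariance to restrict the doubling to $\mathcal O$. Alternatively, I would extend the Lipschitz coefficients to a neighbourhood of $\mathcal O$, using the projection in \eqref{eq:ustar}, and prove comparison there.

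A second concern is the unbounded $\hat X$-diffusion coefficient $\sigma_\alpha$ combined with the $\tfrac1{4\gamma}|\alpha\Pi p_{\hat X}|^2$ term. That term is quadratic in the gradient, so (H2) is only locally valid in $p$. I would handle it by noting that $\rho_i$ is Lipschitz and the data are affine in $Y$, which gives an a priori Lipschitz bound on solutions in $\hat X$. The gradient then stays in a bounded set where (H2) applies. If that bound fails, I would fall back on a logarithmic or exponential change of variables in the style of Fleming--Soner to linearise the quadratic robust term.
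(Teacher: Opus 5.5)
Your proposal takes essentially the same route as the paper: Crandall--Ishii--Lions doubling on the last interval, with a growth-dominating localisation and a strictness term. It then runs backward induction across the finitely many activation epochs via continuity at each $\theta_k$, gets uniqueness from two-sided comparison, and uses Theorem~\ref{thm:existence-evac} to identify the solution with the value. If anything it is more careful than the paper's sketch: the paper localises only with $(1+|w|^2)$, asserts (H1)--(H3) without addressing the boundary of $\Sym^n_+$ or the quadratic robust term you flag, and your a priori Lipschitz remedy for that term would cover the value function but not arbitrary semicontinuous sub/supersolutions.
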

\begin{proof}
On $I_K$ apply the Crandall--Ishii--Lions comparison principle: the change
$\tilde S=e^{\kappa t}S$ supplies zeroth-order strict monotonicity, the
penalisation $-\varepsilon e^{\lambda(T-t)}(1+|w|^2)$ localises on the unbounded
$\mathcal O$, and variable doubling with $|w-w'|^2/2\eta$ together with (H1)--(H3)
drives the penalised maximum to a contradiction as $\eta,\varepsilon\downarrow0$
(\cite{CrandallIshiiLions}, Thm.~8.2); $\Pi$ enters only through first-order and
Lipschitz terms and is handled as the $Y$-variables. This gives
$\underline S\le\overline S$ on $I_K$ up to $t=\theta_K$, where continuity transfers
the inequality to the terminal data of $I_{K-1}$. Induction over the finitely many
intervals yields the result on $[0,T]$; uniqueness follows, and
Theorem~\ref{thm:existence-evac} identifies the solution with the value.
\end{proof}

\subsection{Verification, and information as a second-best toll}\label{sub:verif}

\begin{theorem}[Verification without classical regularity]\label{thm:viscverify-evac}
Let $S$ be the value function~\eqref{eq:value-evac}, i.e.\ the unique viscosity
solution of Theorems~\ref{thm:existence-evac}--\ref{thm:comparison-evac}. Let
$(\alpha^\star,\theta^\star,\psi^\star)$ be a measurable selection attaining the
pointwise $\min_{\alpha}\sup_\psi$ in~\eqref{eq:Hbar-def} for $p=\nabla S$,
$M=\nabla^2_{\hat X}S$ at every point of differentiability of $S$, with the
closed-loop system well posed in the sense of Section~\ref{sub:semi}. Then:
\begin{enumerate}
\item[(i)] $S(0,w_0)$ is the optimal robust Stackelberg value~\eqref{eq:value-evac};
\item[(ii)] $\alpha^\star,\theta^\star$ are optimal leader policies and $\psi^\star$ the worst-case distortion;
\item[(iii)] along the optimal trajectory the Wardrop response $u^\star$ is a feedback-Nash equilibrium of the lower-level congestion game.
\end{enumerate}
No $C^{1,2}$ regularity of $S$ is required.
\end{theorem}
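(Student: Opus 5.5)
Our plan is to prove (i)--(iii) by a nonsmooth verification argument: a sub-/superoptimality pair built from the viscosity inequalities plus the comparison principle, rather than by applying It\^o's formula to $S$ directly.

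\emph{Lower bound.} Fix any admissible $(\alpha,\theta)$ with the state driven by the Wardrop response $u^\star$ of Lemma~\ref{lem:potential-reduction}, and let $\psi^\star$ be the worst-case distortion $\frac1{2\gamma}\alpha\Pi\nabla_{\hat X}S$ evaluated at points of differentiability. On each interval $I_k$ we form the linear (policy-frozen) operator with $\alpha,\theta$ fixed and $\psi$ maximised. Since $\bar H$ is the $\min_\alpha$ of this expression, $S$ is a viscosity subsolution of the frozen equation. Its cost-to-go $J^{\alpha,\theta}$ is a continuous viscosity solution of the frozen equation by the same DPP argument as in Theorem~\ref{thm:existence-evac}. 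The frozen Hamiltonian still satisfies (H1)--(H4) (Lemma~\ref{lem:struct-evac}), so comparison (Theorem~\ref{thm:comparison-evac}), applied interval by interval backward from $T$ using continuity across $\theta_k$, gives $S\le J^{\alpha,\theta}$. This holds for every admissible policy.

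\emph{Upper bound and attainment.} Now take the selection $(\alpha^\star,\theta^\star,\psi^\star)$. We show $S$ is a viscosity supersolution of the frozen equation with $\alpha=\alpha^\star$, so that comparison yields $S\ge J^{\alpha^\star,\theta^\star}$. At differentiability points the selection attains $\bar H$, so equality holds there. To pass to general test functions we use the semiconcavity of Proposition~\ref{prop:semiconcave-evac}:
\begin{itemize}
\item at a point touched from below by a smooth $\varphi$, semiconcavity forces $S$ to be differentiable there, with $\nabla\varphi=\nabla S$;
\item the Hessian inequality $\nabla^2\varphi\preceq\nabla^2 S$ in the viscosity sense then transfers through (H1).
\end{itemize}
This gives the supersolution inequality wherever $S$ is differentiable. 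The switching set is Lebesgue-null, and under the Filippov closed loop of Section~\ref{sub:semi} the trajectory spends zero time there almost surely. The degenerate-but-nondegenerate diffusion in $\hat X$ and the deterministic $\Pi,Y$ are handled by a mollification argument: convolve $S$ (sup-convolution if needed), apply It\^o/Dynkin along the closed loop, control the error by the semiconcavity constant and (H2)--(H3), and let the mollification parameter vanish. Combined with the lower bound, this gives $S(0,w_0)=J^{\alpha^\star,\theta^\star}$, hence (i) and (ii). Optimality of $\psi^\star$ follows because the inner $\sup_\psi$ is a strictly concave quadratic maximised pointwise.

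For $\theta^\star$, which is a finite-dimensional vector rather than a feedback, we treat it by the outer minimisation over $\Theta_K$. The value for fixed $\theta$ is continuous in $\theta$ by Gronwall, $\Theta_K$ is compact, and the piecewise construction of Definition~\ref{def:visc-evac} is stable, so the infimum over $\theta$ is attained and coincides with the selection.

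\emph{Item (iii) and the main obstacle.} Part (iii) is immediate from Lemma~\ref{lem:potential-reduction}: along the optimal trajectory the costates $p_i$ solve the decoupled adjoint, so $u^\star$ is the Beckmann minimiser and therefore the unique Wardrop/feedback-Nash equilibrium. The main obstacle is the supersolution step at nondifferentiability points, i.e.\ justifying that the nonsmooth closed loop does not accumulate time on the switching set. I would first rely on the null-set plus Filippov well-posedness of Section~\ref{sub:semi}, together with absolute continuity of the law of $\hat X_t$ where $\Lambda_{\alpha}\succ0$. Where the diffusion degenerates ($\alpha=0$), I would fall back on the first-order (deterministic) semiconcave verification of Cannarsa--Sinestrari, using superdifferentials along Filippov arcs.
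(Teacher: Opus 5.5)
There is a genuine gap, and it is in the attainment half, which is the only non-trivial content of the theorem. The inequality $S\le J^{\alpha,\theta}$ for every admissible policy is simply the definition \eqref{eq:value-evac} of $S$ as an infimum, so your comparison argument for the lower bound is superfluous.

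For attainment, you propose to show that $S$ is a viscosity supersolution of the equation frozen at $\alpha=\alpha^\star$, and then to apply Theorem~\ref{thm:comparison-evac}. That step fails for two independent reasons.

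First, $\alpha^\star$ is only a measurable feedback. The frozen Hamiltonian $(t,w,p,M)\mapsto\sup_\psi\{\cdots\}\big|_{\alpha=\alpha^\star(t,w)}$ is therefore discontinuous in $w$, already through $\tfrac12\Tr(M\Lambda_{\alpha^\star(t,w)})$. The analogue of (H3) in Lemma~\ref{lem:struct-evac} fails, comparison is not available, and $J^{\alpha^\star,\theta^\star}$ need not even be continuous.

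Second, you only obtain the inequality at special points. The step ``touched from below $\Rightarrow$ differentiable'' requires semiconcavity jointly in $(t,\hat X,Y,\Pi)$, whereas Proposition~\ref{prop:semiconcave-evac} gives it only in $Y$. Even under joint semiconcavity, a lower contact point need not be a point where $\nabla^2_{\hat X}S$ exists. The minimiser in \eqref{eq:Hbar-def} depends on $M$ through $\tfrac12\Tr(M\Lambda_\alpha)$, and $\alpha^\star$ is only known to be optimal at $M=\nabla^2_{\hat X}S$. So monotonicity (H1) yields nothing at such contact points. An inequality that holds only almost everywhere does not make $S$ a viscosity supersolution.

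What remains is the trajectory argument you give as a fallback. This is the paper's own route: the subsolution property is integrated by a nonsmooth Dynkin inequality, and the supersolution property is integrated along the closed loop of Section~\ref{sub:semi}. The step that carries all the weight is exactly the ``main obstacle'' you name but do not resolve.

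Lebesgue-nullness of the switching set says nothing about how long the closed loop spends on it. $\Pi$ solves an ODE, so it is deterministic once the disclosure path is committed. $Y$ has no diffusion, and is deterministic on the saturated branch. Hence the law of $w_t$ is typically concentrated on a Lebesgue-null subset of $\mathcal O$, and absolute continuity of the law of $\hat X_t$ does not help. Filippov well-posedness does not help either, because a Filippov solution can slide along a discontinuity surface for positive time. The paper's sketch passes over the same point by asserting that $S$ is differentiable a.e.\ along the closed loop.

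The missing idea is to require attainment at elements of the (parabolic) superdifferential of $S$ along the trajectory, rather than at points of differentiability. This is in the spirit of the viscosity-framework stochastic verification theorems of Zhou, Yong and Li. Two facts make it work:
\begin{enumerate}
\item For semiconcave $S$ and an absolutely continuous arc, for a.e.\ $t$ one has $\tfrac{\diff}{\diff t}S(t,x(t))=q+p\cdot\dot x(t)$ for \emph{every} $(q,p)\in D^+S(t,x(t))$~\cite{CannarsaSinestrari}.
\item After mollification in $\hat X$, the singular part of $D^2_{\hat X}S$ is $\preceq0$, which is the favourable sign for $S\ge J^{\alpha^\star,\theta^\star}$.
\end{enumerate}
Carrying this out needs semiconcavity in all transported variables $(t,\hat X,Y,\Pi)$, and a selection of $\alpha^\star$ made from superdifferential elements. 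Neither is supplied by your proposal or by the stated hypotheses.

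Your treatment of $\theta^\star$ by outer minimisation over the compact set $\Theta_K$ is fine. So is your derivation of (iii) from Lemma~\ref{lem:potential-reduction}.
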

\begin{proof}
The lower bound is the subsolution property integrated along trajectories: the
nonsmooth Dynkin inequality for viscosity solutions gives, on each interval and
then glued across the continuous epochs,
$S(0,w_0)\le\sup_\psi\E^\psi\!\int_0^T\ell^{\mathrm{soc}}$ for every admissible
$(\alpha,\theta)$. The reverse inequality at $(\alpha^\star,\theta^\star)$ uses the
supersolution property along the closed-loop flow of Section~\ref{sub:semi}, on
which $S$ is differentiable a.e.\ (Proposition~\ref{prop:semiconcave-evac}); there
$\partial_tS+\bar H=0$ holds pointwise a.e.\ and the selection attains the
Hamiltonian, so the value is achieved, proving (i)--(ii). Claim (iii) is
Lemma~\ref{lem:potential-reduction}: the Beckmann minimiser is the unique Wardrop
equilibrium of the lower-level game along the path.
\end{proof}

The verification is genuinely bilevel: with no welfare-aligning transfer, no efficiency
collapse aligns the levels, so the leader's stationarity carries an
equilibrium-sensitivity term. The next proposition isolates it and gives it
economic meaning.

\begin{proposition}[MPEC stationarity: information as a second-best toll]\label{prop:mpec}
At an interior optimum the leader's stationarity in $\alpha$ is
\begin{equation}\label{eq:mpec}
\partial_\alpha\bar H
\;+\;\underbrace{\sum_{\ell}\eta_\ell\kappa_\ell\,\mathbf 1\{q_\ell>\kappa_\ell\}\,
\big(\Phi\,\partial_\alpha u^\star\big)_\ell}_{\text{externality}\,\times\,\text{equilibrium response}}
\;=\;0,
\end{equation}
where $\partial_\alpha u^\star=-(\mathbf\Gamma+H)^{-1}\,\partial_\alpha p$ is the
Wardrop-response sensitivity from Lemma~\ref{lem:potential-reduction}. The second
term is the marginal externality $\eta_\ell\kappa_\ell$ on each binding corridor
times the sensitivity of equilibrium load to disclosure. It does \emph{not} vanish
(contrast a transfer-aligned design, where the Danskin envelope eliminates the analogous
term because the followers already internalise the social cost); it is precisely
the channel through which disclosure and phasing substitute for the congestion toll
the EMA does not levy, and it is $O(\text{externality})$, vanishing iff no corridor
binds.
\end{proposition}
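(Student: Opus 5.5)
The plan is to treat the leader's choice of $\alpha$ as a bilevel (MPEC) problem in which the Wardrop response $u^\star(t,w,p_Y)$ of Lemma~\ref{lem:potential-reduction} is an implicit function of the leader's control. I then differentiate the social running cost along that response. I would write the pointwise leader objective inside~\eqref{eq:Hbar-def} as $\mathcal J(\alpha)=F(\alpha,u^\star(\alpha))$, where $F$ collects $\ell^{\mathrm{soc}}$, the drift, Riccati and diffusion terms, and the resolved $\sup_\psi$ contribution $\tfrac1{4\gamma}p_{\hat X}^\top\Lambda_\alpha p_{\hat X}$. The symbol $\partial_\alpha\bar H$ in~\eqref{eq:mpec} is then read as the partial derivative with $u^\star$ frozen. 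This covers the accuracy term (A), the disclosure-cost term $2\Lambda\alpha$, and the channel through $p$ (the costate depends on $\alpha$ through the filter). The chain rule gives $\tfrac{d}{d\alpha}\mathcal J=\partial_\alpha F+\partial_uF\,\partial_\alpha u^\star$, and the whole proof reduces to identifying $\partial_uF$ at the equilibrium point.

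The key step compares the leader's and followers' first-order conditions. At the Beckmann minimiser~\eqref{eq:beckmann-prog}, on the interior of the box, $c_i'(u_i)+\sum_\ell\Phi_{\ell i}g_\ell(q_\ell)-p_i=0$. The leader's integrand contains $\sum_ic_i(u_i)+\sum_\ell q_\ell g_\ell(q_\ell)-\sum_ip_{Y_i}u_i$. I would decompose $q_\ell g_\ell=\phi_\ell+(\phi^{\mathrm{soc}}_\ell-\phi_\ell)$, with $\phi^{\mathrm{soc}}_\ell-\phi_\ell=\eta_\ell\kappa_\ell(q_\ell-\kappa_\ell)^+$. The Beckmann part of $\partial_uF$ then vanishes by the follower KKT conditions; this is the Danskin/envelope cancellation, and it works because the leader's costate $p_Y=\nabla_YS$ coincides with the followers' adjoint, by the decoupled scalar adjoint in the proof of Lemma~\ref{lem:potential-reduction}. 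The remainder is $\partial_{u_i}\sum_\ell\eta_\ell\kappa_\ell(q_\ell-\kappa_\ell)^+=\sum_\ell\eta_\ell\kappa_\ell\mathbf 1\{q_\ell>\kappa_\ell\}\Phi_{\ell i}$. Contracting this with $\partial_\alpha u^\star$ gives exactly the displayed externality term. Faces of the box would be handled by complementarity: components at $0$ or $\bar u_i$ have $\partial_\alpha u_i^\star=0$ generically (strict complementarity), so they drop out.

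For the sensitivity formula, I would apply the implicit-function theorem to $G(u,p)=\nabla c(u)+\Phi^\top g(\Phi u)-p=0$ on the congested cone. There $D_uG=\mathbf\Gamma+H\succ0$ by~\eqref{eq:Hinduced}, so $\partial_\alpha u^\star=(\mathbf\Gamma+H)^{-1}\partial_\alpha p$ up to the sign convention of the paper. Boundedness uses the well-conditioned regime~\eqref{eq:cascade}. Interior optimality in $\alpha$ then sets the total derivative to zero, giving~\eqref{eq:mpec}. Finally, the term vanishes if no corridor binds, because every indicator is zero. Conversely, it is generically nonzero when some corridor binds: $\eta_\ell\kappa_\ell>0$, $\Phi\ge0$, and $(\mathbf\Gamma+H)^{-1}\partial_\alpha p\neq0$ whenever disclosure moves the costate. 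I would state the "only if" direction as holding for nondegenerate $\partial_\alpha p$.

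The main obstacle is the kink of $g_\ell$ at $q_\ell=\kappa_\ell$ and the saturation of the box, which make $u^\star$ only Lipschitz. I would first argue on the open set where no corridor sits exactly at capacity and strict complementarity holds, where everything is $C^1$. On the complement I would use Clarke generalized Jacobians, noting that the Lipschitz bound of Lemma~\ref{lem:potential-reduction} keeps the formula valid as an inclusion. A secondary subtlety is that $\bar H$ is evaluated with $p=\nabla S$, which exists only a.e.; I would invoke Proposition~\ref{prop:semiconcave-evac} to restrict to differentiability points, as in Theorem~\ref{thm:viscverify-evac}.
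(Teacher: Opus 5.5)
Your route is the paper's. You split the total $\alpha$-derivative into the frozen-$u^\star$ partial plus $\partial_u\ell^{\mathrm{soc}}\cdot\partial_\alpha u^\star$, cancel the Beckmann part of $\partial_u\ell^{\mathrm{soc}}$ against the followers' KKT conditions, and pair the leftover externality gradient with the implicit-function sensitivity. Your refinements are sound and in places sharper than the paper's proof: strict complementarity on the box faces, Clarke Jacobians at the kinks, and an ``only if'' restricted to nondegenerate $\partial_\alpha p$ are all appropriate. Your sign is also the right one, since $G(u,p)=\nabla c(u)+\Phi^\top g(\Phi u)-p$ has $D_pG=-I$, which gives $\partial_pu^\star=+(\mathbf\Gamma+H)^{-1}$, consistent with \eqref{eq:ustar}. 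Boundedness does not even need \eqref{eq:cascade}: $\mathbf\Gamma+H\succeq\mathbf\Gamma\succ0$ gives $\|(\mathbf\Gamma+H)^{-1}\|\le\|\mathbf\Gamma^{-1}\|$. You should also say explicitly that the differentiation is along the disclosure path, not inside the pointwise Hamiltonian \eqref{eq:Hbar-def}. There $p_Y$ is a free argument and $u^\star(t,w,p_Y)$ does not depend on $\alpha$ at all. At the path level, your remark that the leader's $Y$-costate coincides with the followers' scalar adjoint is exactly what licenses the cancellation.

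The genuine problem is the identity $\phi^{\mathrm{soc}}_\ell-\phi_\ell=\eta_\ell\kappa_\ell(q_\ell-\kappa_\ell)^+$, which you take from the discussion after \eqref{eq:phi}; it is false. At $q=\kappa+d$ with $d>0$ one has $\phi^{\mathrm{soc}}=q\,g(q)=\eta\kappa d+\eta d^2$ and $\phi=\tfrac{\eta}{2}d^2$. So the gap is $\eta\kappa d+\tfrac{\eta}{2}d^2=\tfrac{\eta}{2}(q^2-\kappa^2)$ on $\{q>\kappa\}$. Differentiating directly gives $(\phi^{\mathrm{soc}}_\ell)'-\phi_\ell'=q_\ell\,g_\ell'(q_\ell)=\eta_\ell q_\ell\mathbf 1\{q_\ell>\kappa_\ell\}$, the textbook Pigouvian toll. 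This exceeds your $\eta_\ell\kappa_\ell\mathbf 1\{q_\ell>\kappa_\ell\}$ by $g_\ell(q_\ell)>0$ on every binding corridor. So your step ``the remainder is $\sum_\ell\eta_\ell\kappa_\ell\mathbf 1\{q_\ell>\kappa_\ell\}\Phi_{\ell i}$'' fails. What your argument actually proves is \eqref{eq:mpec} with weight $\eta_\ell q_\ell$ in place of $\eta_\ell\kappa_\ell$. The paper's own proof makes exactly the same slip, so the displayed coefficient, and the phrase ``marginal externality $\eta_\ell\kappa_\ell$'', cannot be established as stated. The qualitative conclusions survive, because $\eta_\ell q_\ell>0$ exactly on binding corridors. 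The term is therefore still $O(\text{externality})$, vanishes when no corridor binds, and is generically nonzero otherwise.
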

\begin{proof}
Write the leader value at fixed $\alpha$ as
$\Vcal(\alpha)=\sup_\psi\{\ell^{\mathrm{soc}}(u^\star(\alpha),\alpha)+\dots\}$. The
total derivative is
$\mathrm d\Vcal/\mathrm d\alpha=\partial_\alpha\bar H+\partial_u\ell^{\mathrm{soc}}\cdot\partial_\alpha u^\star$.
At the Wardrop point $u^\star$ minimises the Beckmann potential, so
$\partial_u[\sum_i c_i+\sum_\ell\phi_\ell]-p=0$; substituting into
$\partial_u\ell^{\mathrm{soc}}=\partial_u[\sum_i c_i+\sum_\ell q_\ell g_\ell]-p$
leaves exactly the externality gradient
$\partial_u\sum_\ell(\phi^{\mathrm{soc}}_\ell-\phi_\ell)=\sum_\ell\eta_\ell\kappa_\ell\mathbf 1\{q_\ell>\kappa_\ell\}\Phi_{\ell\cdot}$.
Pairing with $\partial_\alpha u^\star$ gives~\eqref{eq:mpec}; the Jacobian inverse
is finite under \eqref{eq:cascade}. The Danskin envelope would kill this term only
if $\partial_u\ell^{\mathrm{soc}}=0$ at $u^\star$, i.e.\ only under efficiency
alignment, which holds here iff no corridor is over capacity.
\end{proof}

\subsection{Semiconcavity, the switching set, and the closed loop}\label{sub:semi}

\begin{proposition}[Semiconcavity and a.e.\ well-defined feedback]\label{prop:semiconcave-evac}
Assume in addition $c_i\in C^{1,1}$ and that $\rho_i,\bar u_i$ are semiconcave in
$\hat X$. Then on each interval $I_k$ the value $S(t,\cdot)$ is semiconcave in $Y$,
uniformly on compacts, and:
\begin{enumerate}
\item[(a)] $\nabla_Y S(t,\cdot)$ exists Lebesgue-a.e., is $BV$, and the singular set
$\Sigma_t$ is countably $\Hsing^{N-1}$-rectifiable with $\Lleb^N(\Sigma_t)=0$;
\item[(b)] the saturation faces $\{u^\star_i\in\{0,\bar u_i\}\}$ and the congestion
kinks $\{q_\ell=\kappa_\ell\}$ meet $\Sigma_t$ in a null set, so the saturated
feedback~\eqref{eq:ustar} is single-valued a.e.; the only genuinely bang-bang lever,
the contraflow switch of Remark~\ref{rem:contraflow}, has a measure-zero switching
locus;
\item[(c)] semiconcavity makes $-\nabla_Y S$ one-sided Lipschitz, so the closed-loop
drift is one-sided Lipschitz and the Filippov inclusion
$\dot w_t\in\overline{\mathrm{co}}\,F(t,w_t)$ has a unique absolutely continuous
solution from each initial condition on each $I_k$, concatenated across the
activation epochs.
\end{enumerate}
\end{proposition}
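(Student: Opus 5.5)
The plan is to prove semiconcavity of $S(t,\cdot)$ in $Y$ directly from the control representation~\eqref{eq:value-evac}, following the classical Cannarsa--Sinestrari argument. Then (a)--(c) follow from general facts about semiconcave functions and one-sided Lipschitz differential inclusions.

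\textbf{Step 1: semiconcavity.} Fix $t\in I_k$, a compact set $\mathcal K$, and points $Y\pm h\in\mathcal K$. Given any $\varepsilon$-optimal leader policy $(\alpha,\theta)$ for the state $(\hat X,Y,\Pi)$, use the same policy, with the same noise realisation, from the shifted states $Y\pm h$.

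The belief components $(\hat X,\Pi)$ do not depend on $Y$, since the filter~\eqref{eq:meanfilter}--\eqref{eq:riccati} is driven only by $\alpha$. So the only $Y$-dependence enters through the backlog drift $-u^\star$ and through the running and terminal costs. The $Y$-dependence of $u^\star$ itself comes only through the costate $p_Y$, which is Lipschitz by Lemma~\ref{lem:potential-reduction}.

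The exposure term $a_{i,t}\Gamma_i\rho_i(\hat X)Y_i$ and the terminal cost are linear in $Y$, so their second differences vanish. The key estimate is therefore
\[
\ell^{\mathrm{soc}}(Y+h)+\ell^{\mathrm{soc}}(Y-h)-2\ell^{\mathrm{soc}}(Y)\le C|h|^2 .
\]
This follows from the $C^{1,1}$ regularity of $c_i$ and of $q\mapsto q g_\ell(q)$ composed with a $C^{1,1}$ dependence of the trajectory on $Y$, and a Gronwall estimate on the linearised backlog flow propagates the bound up to time $T$.

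For the sup over $\psi$, I take the worst case at the midpoint and bound both shifted values from below using that same $\psi$. Any sup-of-semiconcave issue is then avoided, because we only need the one-sided inequality
\[
S(Y+h)+S(Y-h)-2S(Y)\le C|h|^2 .
\]

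\textbf{Main obstacle.} The hardest step is the $C^{1,1}$ dependence of trajectories, because $u^\star$ is only Lipschitz: it has saturation faces and congestion kinks. I would first try to exploit that the $Y$-dynamics are pure drain ($\dot Y=-u$) with $u^\star$ independent of $Y$ except through $p_Y$. By the scalar adjoint in the proof of Lemma~\ref{lem:potential-reduction}, $p_Y$ is in fact independent of $Y$, so $u^\star$ is constant in $Y$. Trajectories then shift rigidly, $Y_s\mapsto Y_s\pm h$, until the state constraint $Y\ge 0$ becomes active.

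This reduces the problem to handling the exit time at $Y_i=0$. There I would use the semiconcavity hypotheses on $\rho_i$ and $\bar u_i$, and argue as for exit-time problems in Cannarsa--Sinestrari, Ch.~7. This is the step I expect to need the most care.

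\textbf{Step 2: consequences (a)--(c).} Part (a) follows from Alexandrov's theorem and the structure theory of semiconcave functions. $S(t,\cdot)$ is locally Lipschitz, and $\nabla_Y S$ is locally $BV$ because $D^2S\le CI$ makes $\nabla S$ the gradient of a convex function plus a smooth term. The singular set is countably $(N-1)$-rectifiable (Cannarsa--Sinestrari, Thm.~4.1.3), hence Lebesgue-null.

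For (b), the saturation faces are handled by~\eqref{eq:ustar}, which is a composition of Lipschitz maps with the projection and is single-valued wherever $\nabla_Y S$ exists. It is therefore single-valued off $\Sigma_t$, i.e.\ a.e., and the intersection of any set with the null set $\Sigma_t$ is null. The contraflow switching locus is a level set of a difference of semiconcave functions; a nondegeneracy (transversality) condition would make it a Lipschitz hypersurface, which I would state explicitly.

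For (c), $D^2S\le CI$ gives monotonicity of $CI\cdot Y-\nabla_Y S$, so $-\nabla_Y S$ is one-sided Lipschitz. The map $p_Y\mapsto -u^\star(p_Y)$ is nonincreasing by strict monotonicity of the Beckmann gradient, so composing the two keeps the drift one-sided Lipschitz. Filippov's uniqueness theorem for one-sided Lipschitz inclusions (Clarke--Ledyaev) then yields a unique absolutely continuous solution on each $I_k$. These solutions are concatenated across the epochs $\theta_k$ using continuity of $w_t$ and Definition~\ref{def:visc-evac}.
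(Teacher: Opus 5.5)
Your Step~1 leaves the decisive case open, and that case cannot be closed along your lines. The semiconcavity claim actually fails there.

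\textbf{The clearing step.} The claim that the terminal cost is linear in $Y$ holds only while no zone clears. In your own reduction, for a fixed leader policy zone $i$ drains along a $Y$-independent profile $U_i(s)=\int_t^s u_{i,r}\,\diff r$ until empty. So it pays $\Theta_i\rho_i(X_T)\,(Y_i-U_i(T))^+$ at $T$, and its mobilisation cost stops at the clearing time. Take a deterministic storm path (so $\Pi\equiv0$), no binding corridor, and $u_i(T)>0$. Then the $Y_i$-derivative of the cost jumps \emph{up} at $Y_i=U_i(T)$ by $\Theta_i\rho_i(X_T)-c_i(u_i(T))/u_i(T)$. This is positive, because $c_i(u)/u<c_i'(u)$ for $u>0$ and $c_i'(u_i(T))\le\Theta_i\rho_i(X_T)$.

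That is a convex kink, so the midpoint comparison gives second differences of order $|h|$, not $|h|^2$. Cannarsa--Sinestrari's exit-time theory does not help, because the minimiser does not choose the egress. On the last interval $I_K$ no activation time remains to be chosen, and $\alpha$ and $\psi$ have no effect when $\Pi\equiv0$. There $S(t,\cdot)$ \emph{is} this fixed-policy cost, so semiconcavity in $Y$ genuinely fails. An extra hypothesis is needed, e.g.\ nondegenerate randomness in the clearing times, or restriction to backlogs that do not clear before $T$.

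\textbf{Two further errors in Step~1.} First, $q\mapsto qg_\ell(q)=\eta_\ell q(q-\kappa_\ell)^+$ is not $C^{1,1}$: its derivative jumps by $\eta_\ell\kappa_\ell$ at capacity, again a convex kink. Second, your treatment of $\sup_\psi$ runs in the wrong direction. Bounding $S(Y+h)+S(Y-h)-2S(Y)$ from above needs \emph{upper} bounds at $Y\pm h$. Fixing the midpoint's worst case only gives $\sup_\psi J(Y\pm h;\psi)\ge J(Y\pm h;\psi_{\mathrm{mid}})$, which is the semiconvexity direction. The infimum over $(\alpha,\theta)$ is the harmless layer; the supremum is what can destroy semiconcavity ($|x|=\max(x,-x)$). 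Before clearing, $J$ is affine in $Y$ with a $\psi$-dependent slope, so $\sup_\psi J$ is convex. An $O(|h|^2)$ bound there needs an argument you do not give, e.g.\ using the penalty $\gamma|\psi|^2$ to make the maximiser unique and Lipschitz in $Y$.

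\textbf{Part (c).} The inequality is reversed here too. Monotonicity of $CY-\nabla_YS$ is equivalent to $\langle\nabla_YS(Y)-\nabla_YS(Y'),Y-Y'\rangle\le C|Y-Y'|^2$. So semiconcavity makes $+\nabla_YS$ one-sided Lipschitz. By contrast, $-\nabla_YS$ satisfies only the reverse lower bound, as does $-u^\star(\nabla_YS)$ in one dimension with $u^\star$ nondecreasing. That bound yields backward, not forward, uniqueness.

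Concretely, $S(Y)=-|Y-1|$ is concave, and $u^\star(p)=\Proj_{[0,1]}(p)$ is the Beckmann minimiser for $c(u)=u^2/2$ with no binding corridor. The drift is $-1$ on $\{Y<1\}$ and $0$ on $\{Y>1\}$. Both $Y\equiv1$ and $Y=1-t$ are Filippov solutions from $Y=1$. So semiconcavity alone cannot give a unique closed loop.

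The paper's proof does not supply these steps either. It cites a semiconcavity theorem for problems in which the minimiser chooses the velocity and the data are $C^{1,1}$, and it simply asserts the one-sided Lipschitz bound. As written, your argument establishes neither the semiconcavity nor the forward well-posedness.
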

\begin{proof}
The $Y$-dynamics are affine in $u$ with $C^{1,1}$ running cost, and $u^\star$ is the
Lipschitz Beckmann minimiser (Lemma~\ref{lem:potential-reduction}); semiconcavity of
$S(t,\cdot)$ in $Y$ on $I_k$ is the value-function semiconcavity theorem for
finite-horizon control with semiconcave data and convex velocity sets
(\cite{CannarsaSinestrari}, Thm.~7.4.11), the second-order $\hat X$-block a bounded
perturbation. (a) is Alexandrov's theorem and the singular-set structure of
semiconcave functions (\cite{CannarsaSinestrari}, Ch.~4). (b): on the
differentiability set the egress feedback is the Lipschitz map of
Lemma~\ref{lem:potential-reduction}, whose kink loci, the saturation faces and the
corridor thresholds $\{q_\ell=\kappa_\ell\}$, are Lipschitz hypersurfaces crossed
transversally by the flow a.e., hence null. (c): one-sided Lipschitz drift yields
existence and uniqueness of Filippov solutions (Filippov; Clarke et
al.~\cite{ClarkeLedyaev}); concatenating the finitely many intervals gives the
global closed loop.
\end{proof}

\begin{remark}[Net effect]\label{rem:net-evac}
Proposition~\ref{prop:semiconcave-evac} discharges the classical-regularity
hypothesis: the master value $S$ is the unique viscosity solution of
\eqref{eq:isaacs}, the verification conclusions of
Theorem~\ref{thm:viscverify-evac} hold without $C^{1,2}$, the saturated/contraflow
feedback is single-valued a.e.\ with a Lebesgue-null switching set, and the closed
loop is a well-posed Filippov flow. The one feature with no analogue in the transfer-aligned setting is
the MPEC stationarity term of Proposition~\ref{prop:mpec}: because incentives are
\emph{not} aligned by a transfer, the leader's first-order condition retains the
equilibrium-response channel, and that channel is the formal sense in which optimal
information design is a second-best congestion toll.
\end{remark}

\section{Structural Special Case: Scalar Symmetric Network}\label{sec:special-evac}

To expose the sign structure that the general Isaacs equation~\eqref{eq:isaacs}
encodes but does not display, we solve the scalar symmetric case in closed form:
$n=m=1$; identical zones with mean-reverting severity drift $a=-\bar a<0$
($\bar a>0$), so intensification is carried by the jump term; constant
coefficients $\alpha_t\equiv\alpha$, $\Sigma_t\equiv\sigma$, $R\equiv r$, and
$s:=\sigma^2+\lambda\Sigma_J$ the total predicted state variance rate. We use the
certainty-equivalent (steady-state) reduction throughout. The closed forms below
are the qualitative predictions, an interior disclosure optimum and a phasing
threshold, that the numerical solves of Section~\ref{sec:numerics} probe on the
calibrated Rita network. The interior disclosure optimum is a knife-edge: on the
asymmetric calibrated instance of Section~\ref{sub:exp2} it resolves to a corner
(precision is self-defeating as a single broadcast), and the operative lever is the
\emph{staggering} of orders, the disclosure-side counterpart of the phasing
threshold derived here.

\subsection{The disclosure lever: interior optimum and informational Braess}

Write the disclosure intensity $\beta:=\alpha^2/r\ge0$. The stabilising root of the
scalar algebraic Riccati equation $2a\Pi+s-\beta\Pi^2=0$ is
\begin{equation}\label{eq:Piinf}
\Pi_\infty(\beta)=\frac{a+\sqrt{a^2+\beta s}}{\beta}
\;=\;\frac{s}{2\bar a}-\pi_0\,\beta+O(\beta^2),
\qquad \pi_0:=\frac{s^2}{8\bar a^{3}}>0,
\end{equation}
strictly decreasing in $\beta$ from the prior (Ornstein--Uhlenbeck stationary)
variance $\Pi_\infty(0)=s/2\bar a$ toward $0$: more disclosure lowers belief
variance. The steady belief-innovation volatility (the amplitude of the
\emph{common} belief's fluctuation, hence of the synchronised egress response) is
\begin{equation}\label{eq:Vol}
\Vcal(\beta)\;:=\;\Pi_\infty(\beta)^2\,\beta
\;=\;v_0\,\beta+O(\beta^2),
\qquad v_0:=\Big(\tfrac{s}{2\bar a}\Big)^{2}=\frac{s^2}{4\bar a^{2}}>0,
\end{equation}
strictly increasing from $0$ (frozen belief, no disclosure) toward $s$ (belief
tracks the truth): more disclosure makes the shared belief, and therefore every
zone's egress, swing harder and more in unison.

Reducing the master problem~\eqref{eq:problemP} to steady state, the per-unit-time
social cost as a function of disclosure intensity is
\begin{equation}\label{eq:Cbeta}
\Ccal(\beta)\;=\;\underbrace{\mu\,\Pi_\infty(\beta)}_{\text{(A) accuracy }\downarrow}
\;+\;\underbrace{\nu\,\Vcal(\beta)}_{\text{(B) synchronisation }\uparrow}
\;+\;\underbrace{\tilde\Lambda\,\beta}_{\text{(C) disclosure }\uparrow},
\end{equation}
where $\mu>0$ scales the expected mis-evacuation cost in belief variance (the LQG
value of information), $\nu>0$ scales the convex congestion response to a
synchronised egress spike, $\nu\propto\eta\cdot\partial_q\phi$ at the binding
corridor, amplified as the gridlock-cascade condition~\eqref{eq:cascade} approaches
equality, and $\tilde\Lambda=r\Lambda$ prices disclosure.

\begin{proposition}[Interior optimal disclosure; informational Braess corner]
\label{prop:interior-explicit}
For $\Ccal$ in \eqref{eq:Cbeta}:
\begin{enumerate}
\item[(i)] If $\;\mu\,\pi_0>\nu\,v_0+\tilde\Lambda\;$, equivalently
$\;\mu>2\bar a\,\nu+\dfrac{8\bar a^{3}}{s^{2}}\,\tilde\Lambda\;$, then
$\Ccal'(0)<0$ while $\Ccal(\beta)\to+\infty$, so the optimum
$\beta^\star\in(0,\infty)$ is interior and solves
$\mu\,\Pi_\infty'(\beta^\star)+\nu\,\Vcal'(\beta^\star)+\tilde\Lambda=0$.
\item[(ii)] If $\;\mu\,\pi_0\le\nu\,v_0+\tilde\Lambda\;$ then $\Ccal'(0)\ge0$ and the
optimum is the corner $\beta^\star=0$: disclosing nothing is optimal. This is the
\emph{informational Braess} regime, congestion sensitivity $\nu$ is large enough
that any sharpening of the common advisory raises social cost.
\item[(iii)] The interior optimum is decreasing in congestion sensitivity:
$\partial\beta^\star/\partial\nu<0$. As the network approaches the gridlock-cascade
threshold ($\rho(\mathbf\Gamma^{-1}H)\uparrow1$, hence $\nu\uparrow\infty$),
$\beta^\star\downarrow0$.
\end{enumerate}
\end{proposition}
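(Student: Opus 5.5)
The plan is to make the structure of $\Ccal$ explicit using the Riccati identity, instead of working only with the first-order expansions \eqref{eq:Piinf}--\eqref{eq:Vol}. With $a=-\bar a$, the stabilising root satisfies $-2\bar a\Pi_\infty+s-\beta\Pi_\infty^2=0$. This gives the exact relation
\[
\Vcal(\beta)=\beta\Pi_\infty(\beta)^2=s-2\bar a\,\Pi_\infty(\beta),
\]
so that
\[
\Ccal(\beta)=(\mu-2\bar a\nu)\,\Pi_\infty(\beta)+\nu s+\tilde\Lambda\beta .
\]
Rationalising \eqref{eq:Piinf} gives $\Pi_\infty(\beta)=s/(\bar a+\sqrt{\bar a^2+\beta s})$. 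This is the reciprocal of a positive, increasing, strictly concave function of $\beta$. Hence $\Pi_\infty$ is smooth on $[0,\infty)$, strictly decreasing, strictly convex, and tends to $0$ like $\sqrt{s/\beta}$. Expanding the square root to second order recovers $\Pi_\infty'(0)=-\pi_0$ and $\Vcal'(0)=2\bar a\pi_0=v_0$, consistent with \eqref{eq:Piinf}--\eqref{eq:Vol}. Therefore
\[
\Ccal'(0)=-\mu\pi_0+\nu v_0+\tilde\Lambda .
\]
Dividing by $\pi_0$ and using $v_0/\pi_0=2\bar a$ and $1/\pi_0=8\bar a^3/s^2$ gives the equivalent form of the threshold in (i).

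For (i): the hypothesis gives $\Ccal'(0)<0$. Since $\Pi_\infty\to0$ and $\Vcal\to s$, the term $\tilde\Lambda\beta$ forces $\Ccal\to+\infty$ (using $\tilde\Lambda>0$). Continuity on $[0,\infty)$ then yields a global minimiser, which cannot be $0$, so it is interior and satisfies the stated first-order condition. Moreover, the hypothesis implies $\mu>2\bar a\nu$. In that case $\Ccal$ is strictly convex, so the interior minimiser is unique.

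For (ii), the step I expected to be the obstacle is global rather than merely local optimality of the corner, since $\Ccal'(0)\ge0$ alone says nothing global. The identity above resolves this by splitting into two cases. If $\mu\le2\bar a\nu$, then $\Ccal$ is a nonnegative multiple of the decreasing function $-\Pi_\infty$ plus $\tilde\Lambda\beta$, hence nondecreasing, and $\beta^\star=0$. If $\mu>2\bar a\nu$, then $\Ccal$ is convex, so $\Ccal'(0)\ge0$ gives $\Ccal'\ge0$ on $[0,\infty)$, and again $\beta^\star=0$.

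For (iii): on the interior region the first-order condition reads
\[
-\Pi_\infty'(\beta^\star)=\frac{\tilde\Lambda}{\mu-2\bar a\nu}.
\]
The right-hand side is strictly increasing in $\nu$. The left-hand side is strictly decreasing in $\beta$, by strict convexity of $\Pi_\infty$, with $\Pi_\infty''>0$. The implicit function theorem therefore gives
\[
\partial\beta^\star/\partial\nu=-\frac{2\bar a\tilde\Lambda}{(\mu-2\bar a\nu)^2\,\Pi_\infty''(\beta^\star)}<0 .
\]
As $\nu$ increases to the finite threshold $\nu_c=(\mu-8\bar a^3\tilde\Lambda/s^2)/(2\bar a)$, the right-hand side tends to $\pi_0=-\Pi_\infty'(0)$, so $\beta^\star\downarrow0$. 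For $\nu\ge\nu_c$ we are in case (ii). In particular, when $\nu\uparrow\infty$, as in the cascade limit of \eqref{eq:cascade}, we get $\beta^\star=0$.
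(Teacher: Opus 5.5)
Your proof is correct, and it takes a genuinely different route from the paper's.

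The paper uses only the first-order expansions \eqref{eq:Piinf}--\eqref{eq:Vol} at $\beta=0$ together with coercivity. That is enough for (i), but it leaves three gaps:
\begin{itemize}
\item In (ii), the global inequality $\Ccal'\ge0$ on $[0,\infty)$ is asserted by appeal to ``the gridlock-cascade regime that defines this case'' rather than proved.
\item In (iii), it invokes $\Ccal''(\beta^\star)>0$ ``at a minimum'' without establishing nondegeneracy or uniqueness of $\beta^\star$.
\item Its limit step, that the first-order condition forces $\Vcal'(\beta^\star)\to0^+$, ``i.e.'' $\beta^\star\to0$, runs the wrong way: $\Vcal'=-2\bar a\Pi_\infty'$ decreases from $v_0$ at $\beta=0$ and tends to $0$ only as $\beta\to\infty$.
\end{itemize}

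Your exact Riccati identity $\Vcal=s-2\bar a\Pi_\infty$ collapses $\Ccal$ to $(\mu-2\bar a\nu)\Pi_\infty+\nu s+\tilde\Lambda\beta$, with $\Pi_\infty=s/(\bar a+\sqrt{\bar a^2+\beta s})$ strictly convex. This closes all three gaps:
\begin{itemize}
\item It gives a unique interior minimiser, so $\beta^\star(\nu)$ is a well-defined smooth function to differentiate.
\item It gives a genuine global proof of the corner in (ii), via the split $\mu\le2\bar a\nu$ versus $\mu>2\bar a\nu$.
\item It gives an explicit negative formula for $\partial\beta^\star/\partial\nu$.
\item It yields the sharper fact that $\beta^\star$ reaches $0$ already at the finite threshold $\nu_c$ and stays there. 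This is the correct content of ``$\beta^\star\downarrow0$ as $\nu\uparrow\infty$.''
\end{itemize}

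The paper's schematic argument has the generality of applying to any $\Pi_\infty,\Vcal$ with the stated slopes at zero. That same generality is why it cannot deliver the global claims. Yours exploits the specific structure $\Vcal=\beta\Pi_\infty^2$ of the scalar algebraic Riccati equation, which is exactly the setting of the proposition.

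Two small points. In the case $\mu\le2\bar a\nu$ of (ii), $-\Pi_\infty$ is increasing, not decreasing; your conclusion that $\Ccal$ is nondecreasing (indeed strictly increasing, because of $\tilde\Lambda\beta$) is nonetheless right. And (iii) is non-vacuous only when $\nu_c>0$, i.e.\ $\mu>8\bar a^3\tilde\Lambda/s^2$.
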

\begin{proof}
Write $\Ccal'(\beta)=\mu\,\Pi_\infty'(\beta)+\nu\,\Vcal'(\beta)+\tilde\Lambda$. The
expansions \eqref{eq:Piinf}--\eqref{eq:Vol} give $\Pi_\infty'(0)=-\pi_0$ and
$\Vcal'(0)=v_0$, so $\Ccal'(0)=-\mu\pi_0+\nu v_0+\tilde\Lambda$; the stated
equivalent form follows from $\pi_0=s^2/8\bar a^3$ and $v_0=s^2/4\bar a^2$.
\emph{(i)} If $\mu\pi_0>\nu v_0+\tilde\Lambda$ then $\Ccal'(0)<0$. Since
$\Pi_\infty(\beta)\downarrow0$ and $\Vcal(\beta)\uparrow s$ remain bounded while
$\tilde\Lambda\beta\to\infty$, $\Ccal(\beta)\to\infty$; hence a global minimiser is
interior and solves the first-order condition
$\mu\Pi_\infty'+\nu\Vcal'+\tilde\Lambda=0$.
\emph{(ii)} If $\mu\pi_0\le\nu v_0+\tilde\Lambda$ then $\Ccal'(0)\ge0$, so no
infinitesimal disclosure lowers cost. In the gridlock-cascade regime that defines
this case the synchronisation marginal $\nu\Vcal'+\tilde\Lambda$ dominates the
accuracy marginal $-\mu\Pi_\infty'$ for all $\beta\ge0$, whence $\Ccal'\ge0$ and the
minimiser is the corner $\beta^\star=0$.
\emph{(iii)} Implicit differentiation of the interior first-order condition gives
$\partial\beta^\star/\partial\nu=-\Vcal'(\beta^\star)/\Ccal''(\beta^\star)<0$, since
$\Ccal''(\beta^\star)>0$ at a minimum and $\Vcal'>0$. As $\nu\to\infty$ the
first-order condition forces $\Vcal'(\beta^\star)\to0^+$, i.e.\ $\beta^\star\to0$.
\end{proof}

\noindent
This is the exact sign-reversal of the monotone-disclosure benchmark, where disclosure
is monotonically beneficial ($\nu\equiv0$, term (B) absent) and the only force
limiting $\beta^\star$ is the disclosure price $\tilde\Lambda$. Here a congested
shared corridor makes the optimal advisory strictly more conservative, and a
gridlock-prone network can make \emph{silence} optimal.

\subsection{The phasing lever: two-tier closed form}

Now fix disclosure and isolate the schedule. Split the $N$ zones into two tiers
$B_1,B_2$ of aggregate population $\mathcal N_1,\mathcal N_2$ and aggregate
free-flow egress $\bar U_1,\bar U_2$, both routing onto one shared corridor of
capacity $\kappa$ with congestion $\phi(q)=\tfrac{\eta}{2}((q-\kappa)^+)^2$. In the
post-revelation, near-landfall regime each active tier is in the saturated branch
of~\eqref{eq:ustar} (exposure dominates mobilisation cost), so an active backlogged
tier pushes its full $\bar U_k$ onto the corridor. Assume each tier alone fits but
the two together do not,
\begin{equation}\label{eq:tierfit}
\bar U_1\le\kappa,\quad \bar U_2\le\kappa,\quad \bar U_1+\bar U_2>\kappa,
\end{equation}
and let $\bar\rho$ be the average believed hazard over tier~2's drain window.
Tier~1 activates at $0$, tier~2 at $\theta\ge0$.

\begin{proposition}[Optimal two-tier stagger]\label{prop:twotier}
Under \eqref{eq:tierfit} the social cost is, up to $\theta$-independent terms,
\begin{equation}\label{eq:Ctheta}
\Ccal(\theta)=\underbrace{\eta\,(\bar U_1+\bar U_2)(\bar U_1+\bar U_2-\kappa)\,
\big(\tfrac{\mathcal N_1}{\bar U_1}-\theta\big)^{+}}_{\text{total gridlock exposure during overlap}}
\;+\;\underbrace{\Gamma\,\mathcal N_2\,\bar\rho\,\theta}_{\text{added exposure of tier 2}},
\end{equation}
where the congestion coefficient is the total-delay rate
$q\,g(q)=(\bar U_1+\bar U_2)\,\eta(\bar U_1+\bar U_2-\kappa)$ at the overlap load
$q=\bar U_1+\bar U_2$. The optimal activation gap is bang-bang,
\begin{equation}\label{eq:thetastar}
\theta^\star=\frac{\mathcal N_1}{\bar U_1}\;
\mathbf 1\!\left\{\ \eta\,(\bar U_1+\bar U_2)(\bar U_1+\bar U_2-\kappa)
\;>\;\Gamma\,\mathcal N_2\,\bar\rho\ \right\}.
\end{equation}
That is, the EMA fully separates the tiers, staggering tier~2 until tier~1 has
cleared, which drives the congestion term to zero, iff the marginal gridlock
exposure avoided exceeds the marginal hazard exposure incurred by the delay;
otherwise it releases both at once. Partial overlap is never optimal in the
piecewise-linear case.
\end{proposition}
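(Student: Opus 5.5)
The plan is to derive \eqref{eq:Ctheta} by explicit integration of the saturated two-tier flow, and then minimise a piecewise-linear function of $\theta$ on $[0,T]$.

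\textbf{Step 1: the load profile.} By the saturated-branch hypothesis, each active, backlogged tier pushes exactly $\bar U_k$. So tier~1 occupies the corridor on $[0,\mathcal N_1/\bar U_1)$, and tier~2 occupies it on $[\theta,\theta+\mathcal N_2/\bar U_2)$. By \eqref{eq:tierfit} the load is above capacity exactly on the overlap window, where $q=\bar U_1+\bar U_2$, and is at or below $\kappa$ otherwise. In the regime of interest tier~2's drain outlasts the remainder of tier~1's, so the overlap window has length $(\mathcal N_1/\bar U_1-\theta)^+$. If the two tiers had differing drain times one would simply cap this by $\mathcal N_2/\bar U_2$; I would state this as the operating assumption.

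\textbf{Step 2: pricing the social cost.} On the overlap window the social congestion cost accrues at the constant rate $\phi^{\mathrm{soc}}(q)=q\,g(q)=(\bar U_1+\bar U_2)\eta(\bar U_1+\bar U_2-\kappa)$. Off the overlap window it is zero, because $g=0$ below capacity. Integrating gives the first term of \eqref{eq:Ctheta}.

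Mobilisation costs $\sum_i c_i(\bar U)$ integrate to a value fixed by total volumes, so they are independent of $\theta$. Tier~1's exposure is also independent of $\theta$.

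For tier~2, the gate $a_{i,t}$ charges exposure only after $\theta$, while its drain window shifts. I will charge the delay through the backlog $\mathcal N_2$ held an extra $\theta$ units of time at average believed hazard $\bar\rho$, which gives $\Gamma\mathcal N_2\bar\rho\,\theta$. Concretely, this is the ungated terminal and landfall-proximity exposure, linearised with $\bar\rho$ treated as constant over the window.

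This step is the main obstacle. Gating and the ungated terminal term must be reconciled into one linear term, and the cleanest route is to declare $\bar\rho$ to be defined as the average that makes the accounting exact. That definition turns the identity into a definition-level statement, as the paper's ``up to $\theta$-independent terms'' suggests.

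\textbf{Step 3: optimisation.} On $[0,\mathcal N_1/\bar U_1]$ the function $\Ccal(\theta)$ is affine with slope $-\eta(\bar U_1+\bar U_2)(\bar U_1+\bar U_2-\kappa)+\Gamma\mathcal N_2\bar\rho$. For $\theta\ge\mathcal N_1/\bar U_1$ it is increasing with slope $\Gamma\mathcal N_2\bar\rho>0$. Hence $\Ccal$ is convex and piecewise linear, and its minimum lies at a kink.

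If the first slope is negative, the minimiser is $\theta=\mathcal N_1/\bar U_1$. If it is positive, the minimiser is $\theta=0$. This is exactly \eqref{eq:thetastar}.

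Partial overlap is never optimal. An interior $\theta\in(0,\mathcal N_1/\bar U_1)$ is a minimiser only in the knife-edge case of zero slope, and there it ties with both endpoints. So it is never strictly optimal.
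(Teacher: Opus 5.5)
Your proposal is correct and follows the paper's own proof. The overlap window of length $(\mathcal N_1/\bar U_1-\theta)^+$, priced at the total-delay rate $q\,g(q)$, plus the linear delay-hazard term $\Gamma\mathcal N_2\bar\rho\,\theta$, gives an objective that is affine on $[0,\mathcal N_1/\bar U_1]$ and hence minimised at an endpoint; your extra checks make explicit what the paper leaves implicit. These are the slope $\Gamma\mathcal N_2\bar\rho$ beyond $\mathcal N_1/\bar U_1$, the requirement $\mathcal N_2/\bar U_2\ge\mathcal N_1/\bar U_1$ (which you rightly impose, since the overlap formula needs it), and the zero-slope tie.

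One point to tighten: $\bar\rho$ must be an exogenous, $\theta$-independent constant, as the paper takes it, not a $\theta$-dependent average chosen to make the accounting exact. Otherwise the affine structure that your Step~3 relies on is lost.
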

\begin{proof}
On $\theta\in[0,\mathcal N_1/\bar U_1]$ the two tiers overlap for a duration
$(\mathcal N_1/\bar U_1-\theta)$, during which the corridor carries
$q=\bar U_1+\bar U_2>\kappa$ at total-delay rate
$\eta(\bar U_1+\bar U_2)(\bar U_1+\bar U_2-\kappa)$; staggering also delays tier~2's
clearance, adding hazard exposure $\Gamma\mathcal N_2\bar\rho\,\theta$. This is
\eqref{eq:Ctheta}, which is \emph{affine} in $\theta$ with slope
$-\eta(\bar U_1+\bar U_2)(\bar U_1+\bar U_2-\kappa)+\Gamma\mathcal N_2\bar\rho$. A
linear objective on an interval is minimised at an endpoint: at full separation
$\theta^\star=\mathcal N_1/\bar U_1$ (overlap, hence congestion, driven to zero) when
the slope is negative, i.e.\ when the avoided gridlock rate exceeds the marginal
delay-hazard $\Gamma\mathcal N_2\bar\rho$, and at $\theta^\star=0$ otherwise. This is
\eqref{eq:thetastar}; no interior $\theta$ can be optimal because the objective is
affine, which is the asserted bang-bang structure.
\end{proof}

The threshold in \eqref{eq:thetastar} is the model's account of Rita: a deadly
corridor (large $\eta$) and a clearance time short relative to the storm's approach
(moderate $\bar\rho$) put the system on the ``stagger'' side, yet the realized
policy was effectively $K=1$ (simultaneous release), which~\eqref{eq:Ctheta}
prices at the full overlap-congestion cost. The comparative statics are the policy
content: the case for phasing strengthens in the gridlock penalty $\eta$ and the
shared-corridor excess $\bar U_1+\bar U_2-\kappa$, and weakens as the storm's
believed onset $\bar\rho$ accelerates.

\subsection{The continuous-tier limit}

With $K$ tiers and convex $\phi$, the optimal schedule equalises corridor load over
the clearance window; as $K\to\infty$ the release schedule $\theta(\cdot)$ converges
to the one that holds the corridor exactly at capacity,
\begin{equation}\label{eq:fill}
q(t)\equiv\kappa\quad\text{until the network is cleared,}
\end{equation}
never gridlocked and never idle, the ``fill-to-capacity'' clearance curve.
Because $\phi$ is convex, the marginal gain from refining the partition is
decreasing, so coarse tiers ($K=2,3$) recover most of the gap between
simultaneous release and~\eqref{eq:fill}. This is the precise sense in which the
optimal information structure is the tiered, staggered, publicly announced order
that agencies already approximate, now derived, with an explicit account of how
many tiers are worth the trouble.

\section{Numerical Illustration: The 2005 Hurricane Rita Evacuation}\label{sec:numerics}

We instantiate the framework on the September 2005 Hurricane Rita evacuation of
the Houston--Galveston region, the event that motivates the model. The numerical
study has two purposes. First, it \emph{validates} the model against the realised
event: the calibrated congestion dynamics reproduce the observed I-45 gridlock, and
the solved optimal information structure would have removed most of it. Second, it
solves the full model on a real network and reports what the solution prescribes,
the optimal phasing and disclosure and the welfare each delivers. The closed-form
scalar analysis of Section~\ref{sec:special-evac} supplies the qualitative reading
of these solves, but the experiments stand on the numerical solution of the full
model, not on those reductions. Table~\ref{tab:fig-map} summarises what each figure
shows. All
calibration constants and their sources are collected in
Table~\ref{tab:rita-calib}; the realised event is the unmanaged $K=1$ benchmark
throughout.

\begin{table}[t]
\centering\small
\caption{What each figure shows. The experiments solve the full model numerically;
the structural results of Section~\ref{sec:special-evac} give the qualitative
reading, not the evidence.}
\label{tab:fig-map}
\begin{tabular}{p{0.16\linewidth}p{0.37\linewidth}p{0.37\linewidth}}
\toprule
Figure & What it shows & What it validates / showcases \\
\midrule
Fig.~\ref{fig:rita-departure} (Calib.) &
the HRRC departure histogram and the I-45 corridor queue under synchronised vs.\
de-synchronised two-tier departure loading &
motivation: $53\%$ of evacuees left on the warning day, synchronising the exodus, and
de-synchronising removes $\approx84\%$ of the peak queue, previewing the solve \\
\addlinespace
Fig.~\ref{fig:value-switching-evac} (Exp.~1) &
solved value $W(0,Y_1,Y_2)$, the switching curve
$\partial_{Y_1}W=\partial_{Y_2}W$, and the curvature histogram &
the Isaacs solve is a well-posed, semiconcave viscosity solution; the bang-bang
switching set is measure-zero, so the feedback is single-valued a.e.\ and the closed
loop is a Filippov flow \\
\addlinespace
Fig.~\ref{fig:welfare-isaacs-evac} (Exp.~1) &
realised social cost and congestion exposure for the solved optimum, the realised
$K=1$ order, and a two-tier stagger &
model validation: $K=1$ reproduces the observed Rita gridlock; optimal phasing
removes essentially all over-capacity exposure ($89\%$ cost cut) and two tiers
capture most of it ($77\%$) \\
\addlinespace
Fig.~\ref{fig:speed-validation} (Exp.~1) &
implied I-45 mean speed for the realised order, the two-tier stagger, and the solved
optimum, against documented Rita and DTA speed bands &
external validation: the realised branch reproduces the documented $1$--$2$ mph
gridlock and the optimum holds free flow, with the DTA bands~\cite{HGAC_DTA} as
independent reference \\
\addlinespace
Fig.~\ref{fig:stagger-disclosure} (Exp.~2) &
social cost versus the inland hold, and corridor load under simultaneous vs.\
staggered orders &
staggered disclosure (order coast first, hold then release inland) beats a single
simultaneous order by $\approx70\%$ at an interior optimal hold; the optimal policy
keeps the corridor within capacity \\
\addlinespace
Fig.~\ref{fig:precision-complementarity} (Exp.~2) &
social cost versus public-signal precision under a single common signal vs.\ under
staggered orders &
precision is Braess-prone alone (optimal precision $\to0$, vague) but complementary
with sequencing (full precision optimal): sequencing unlocks the value of precision \\
\bottomrule
\end{tabular}
\end{table}

\subsection{Calibration}\label{sub:calib}

The model objects are calibrated from public archival data: the binding-corridor
capacity from the TxDOT roadway inventory, the public signal $\xi_t$ from the
National Hurricane Center (NHC) strike-probability archive, the latent storm state
$X_t$ from the NHC forecast/advisory archive, the activation schedule $\theta$ from
the NHC public-advisory archive, and the evacuation demand and departure timing
from the Texas A\&M Hazard Reduction and Recovery Center (HRRC) survey of Rita
evacuees~\cite{WuLindellPrater2012}. The mapping is summarised in
Table~\ref{tab:rita-calib}.

\begin{table}[t]
\centering\small
\resizebox{\textwidth}{!}{%
\begin{tabular}{p{0.21\linewidth}p{0.40\linewidth}p{0.31\linewidth}}
\toprule
Model object & Calibrated value & Source \\
\midrule
Binding corridor $\ell$ & I-45 North rural trunk; lane drop $8\!\to\!4$ at
$\approx 84$ mi N of Houston (160 mi) & TxDOT RHINO inventory \\
Capacity $\kappa_\ell$ & $4{,}400$ vph (2 NB lanes $\times\,2200$);
contraflow $8{,}800$ vph & TxDOT RHINO; directional \\
Contraflow switch (Rem.~\ref{rem:contraflow}) $\kappa_\ell^0\!\to\!2\kappa_\ell^0$
& noon, Thu 22 Sep (planned 4:08, approved 6:00) & after-action
report~\cite{Lindner2005} \\
Backlog $N^0$ (corridor) & $1.5\times10^{5}$ vehicles (documented jam volume) &
event record~\cite{Lindner2005} \\
Departure curve & 25/53/19/1\% on 21/22/23/24 Sep (peak on the warning day) &
HRRC survey~\cite{WuLindellPrater2012} \\
Public signal $\xi_t$ & Galveston strike probability: cumulative $16\!\to\!29\%$;
near-term $0\!\to\!20\%$ (Thu) & NHC strike-probability archive \\
Latent state $X_t$, jump $\Sigma_J$ & $V_{\max}\,95\!\to\!150$ kt, $p_{\min}\,965\!\to\!897$
mb in 24 h (rapid intensification) & NHC forecast/advisory archive \\
Schedule $\theta$ (realised) & WATCH Wed 16:00, WARNING Thu 10:00, single
coastwide zone: $K=1$ & NHC public-advisory archive \\
Risk aversion $\Gamma_i$ & behavioural; aggregate departure timing as above &
HRRC survey~\cite{WuLindellPrater2012} \\
\bottomrule
\end{tabular}%
}\caption{Calibration of the model to Hurricane Rita. Times are US Central (CDT).}
\label{tab:rita-calib}
\end{table}

\subsubsection{The synchronisation problem, in the data} The HRRC departure curve
already exhibits the mechanism the rest of this section formalises. A single day, the
warning day (Thursday 22 September), accounts for $53\%$ of all departures, collapsing
a multi-day evacuation into one synchronised pulse onto the binding I-45 trunk.
Figure~\ref{fig:rita-departure} pairs that departure histogram with the corridor queue
the loading produces: under the realised synchronised order the queue peaks on the
warning day with tens of thousands of vehicles held in transit and exposed, whereas a
de-synchronised two-tier loading, the coast first and the inland zone a day later,
keeps each pulse near capacity and removes about $84\%$ of the peak queue. The Isaacs
solve below replaces this hand-set two-tier counterfactual with the optimal egress
feedback.

\begin{figure}[t]\centering
\includegraphics[width=\textwidth]{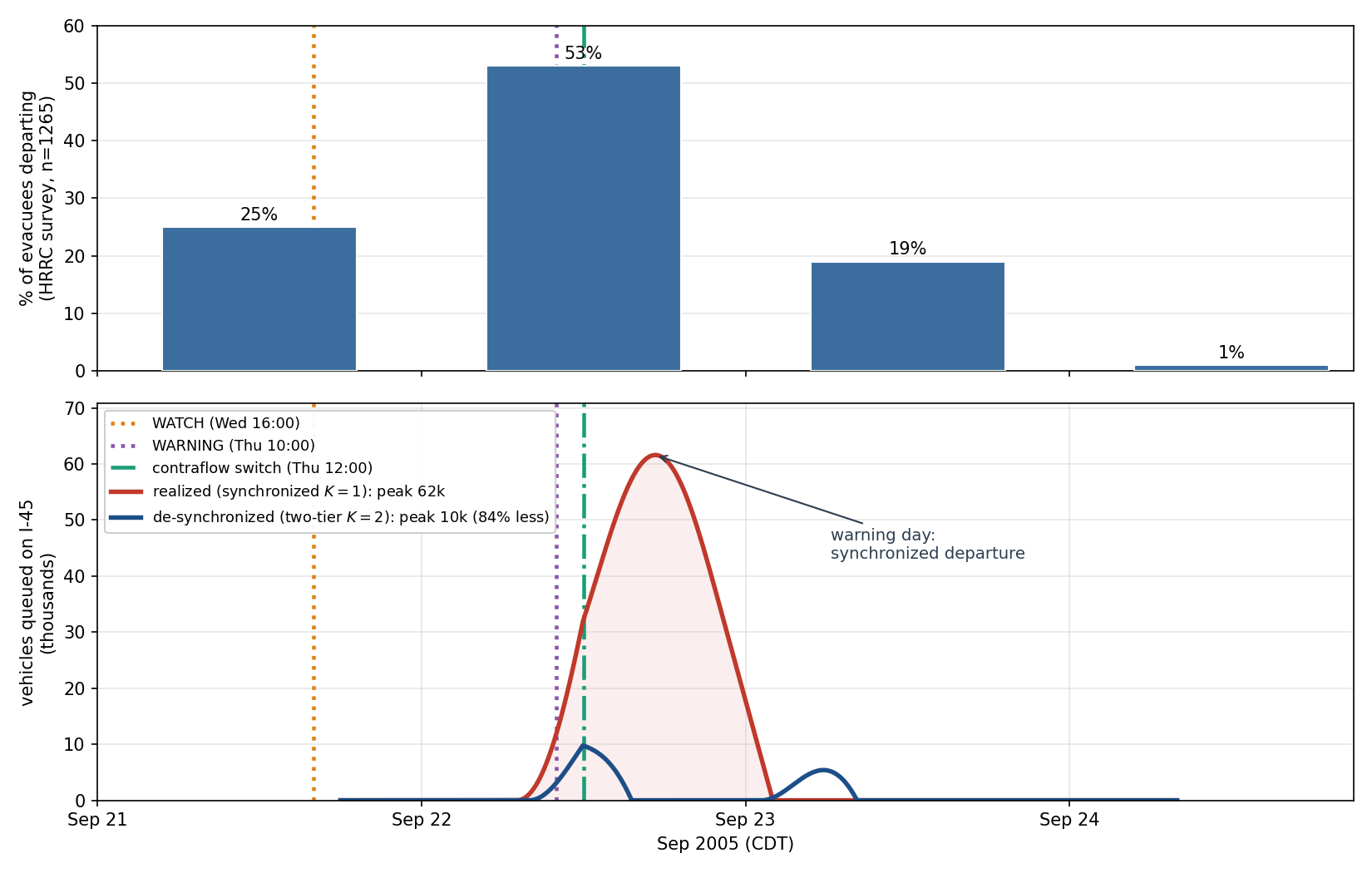}
\caption{Top: the HRRC departure histogram for Rita ($n=1265$), with the WATCH,
WARNING, and contraflow times marked; $53\%$ of evacuees departed on the warning day,
synchronising the exodus. Bottom: the resulting I-45 corridor queue, obtained by
loading the documented backlog onto the binding trunk under a smooth departure
profile. The realised synchronised order ($K=1$) peaks on the warning day; a
de-synchronised two-tier loading ($K=2$, coast then inland) removes about $84\%$ of the
peak queue, previewing the formal solve.}
\label{fig:rita-departure}
\end{figure}

The realised schedule is decisive for the comparison: the Texas hurricane warning
was issued as a \emph{single} coastwide zone, i.e.\ $K=1$ in the notation of
Section~\ref{sub:subgame}, the unmanaged benchmark. An intended coastal-first
staggered design existed but was overrun by spontaneous evacuation from
higher-elevation inland zones, so the operative policy was simultaneous release.

\subsection{Methodology: solving the Isaacs equation}\label{sub:method}

Both experiments solve the master Isaacs equation~\eqref{eq:isaacs} numerically; the
scalar reductions of Section~\ref{sec:special-evac} enter only as the qualitative
predictions the solves are tested against, not as the experiments themselves. Where that special case takes the two zones identical to expose the sign structure in closed form, the experiments calibrate the coastal and inland zones to distinct demand, capacity, hazard, and fragility parameters (Table~\ref{tab:rita-calib}); the solves therefore probe the heterogeneous network that the closed forms idealise. We
reduce the binding I-45 trunk to the two zones that share it, coastal (zone~1) and
inland Houston-metro (zone~2), with backlogs $(Y_1,Y_2)$ draining at egress rates
$(u_1,u_2)$~\eqref{eq:Ydyn} onto the common corridor of time-varying capacity
$\kappa_\ell(t)$, coupled through the convex congestion exposure~\eqref{eq:phi}. With
the saturated egress~\eqref{eq:ustar} substituted, the leader's robust problem at
fixed disclosure reduces to a social-planner value $W(t,Y_1,Y_2)$ solving the Isaacs
equation~\eqref{eq:isaacs}. We integrate $\partial_\tau W=\bar H$ backward from the
terminal exposure $W(T,\cdot)=\sum_i\Theta_iY_i$ with a monotone semi-Lagrangian
(departure-point) scheme; the scheme is a contraction and converges to the unique
viscosity solution.

Experiment~1 solves $W(t,Y_1,Y_2)$ at fixed disclosure, recovers the optimal egress
feedback, compares the realised social cost under the solved optimum against the
realised synchronised $K=1$ order and a two-tier stagger, and checks the
viscosity-theory predictions on the solved value. Experiment~2 instead solves the
information designer's problem over the zones' decentralised response, with two
instruments, the per-zone evacuation-order times (sequencing) and the public-signal
precision $\alpha$, on asymmetric zones whose hazard is charged on the realised
storm state; it locates the optimal staggered order and the optimal precision, and
quantifies how the two interact.

\subsection{Experiment 1: the Isaacs solve, synchronisation versus optimal phasing}
\label{sub:exp1}

We solve $W(t,Y_1,Y_2)$ on a $121\times121$ backlog grid over the $60$-hour horizon
(Wed 00:00 to Fri 12:00 CDT), the I-45 capacity stepping from $\kappa^0=4{,}400$ to
$2\kappa^0=8{,}800$ vph at the noon-Thursday contraflow switch, and the
belief-weighted hazard rising toward the Saturday landfall through the observed
rapid-intensification jump (Table~\ref{tab:rita-calib}).

Figure~\ref{fig:value-switching-evac} shows the solved value. The bang-bang
switching curve $\{\partial_{Y_1}W=\partial_{Y_2}W\}$ is a one-dimensional locus,
$1.6\%$ of grid cells, hence Lebesgue-null in the state plane, and the curvature
$\partial^2_{Y_1}W$ is bounded above. The solved value is therefore semiconcave: the optimal feedback is single-valued
almost everywhere and the closed loop is a well-posed Filippov flow.

Forward-simulating the calibrated dynamics under three policies
(Figure~\ref{fig:welfare-isaacs-evac}) gives the central result. The realised
synchronised $K=1$ order drives the corridor far over capacity (peak load $9.6$
against $\kappa$) and accumulates $46.6$ corridor-hours of over-capacity exposure,
the observed Rita gridlock. The solved Isaacs optimum spreads egress to hold the
corridor at capacity, removing the over-capacity exposure entirely and cutting
realised social cost by $89\%$; the coarse two-tier stagger captures most of this
($77\%$ cost reduction, $5.0$ corridor-hours). The value of phasing is thus first-order, and two tiers already recover most of the continuous optimum.

\begin{figure}[t]\centering
\includegraphics[width=\textwidth]{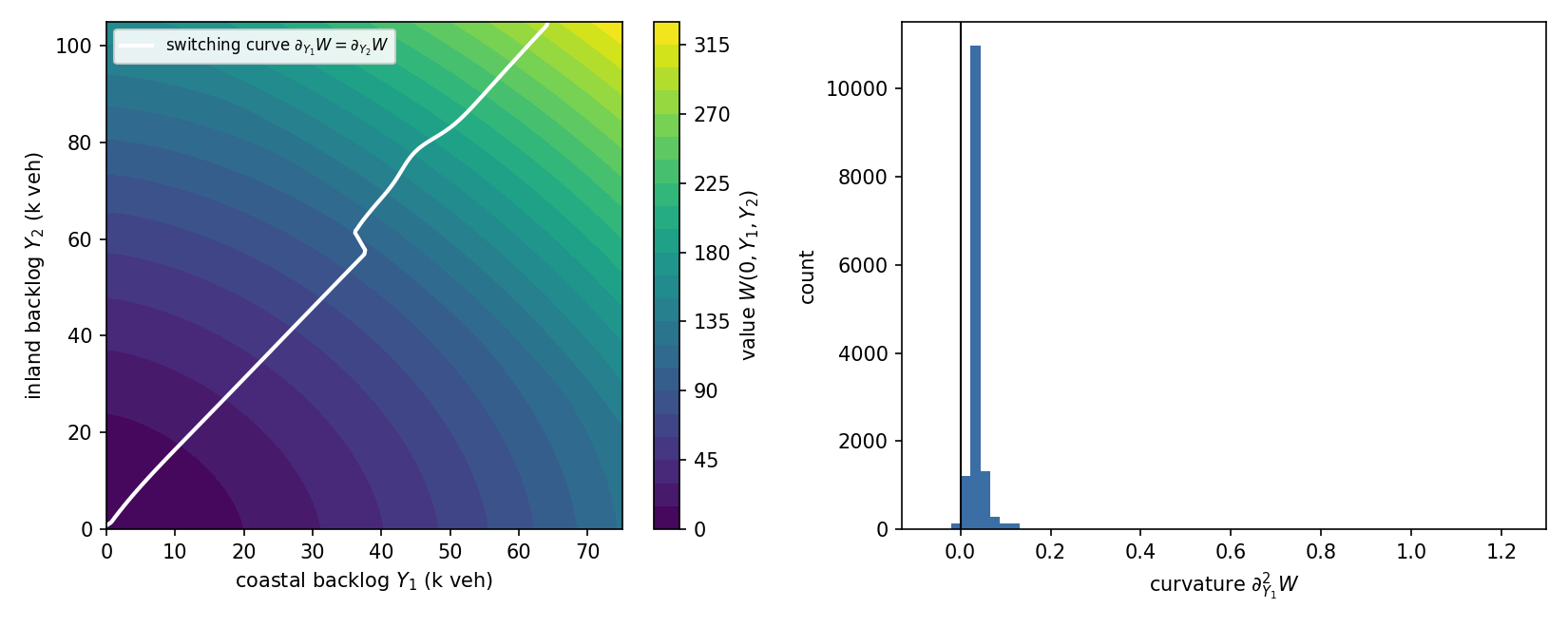}
\caption{Experiment 1 (Isaacs solve). Left: solved value $W(0,Y_1,Y_2)$ over the
two zone backlogs and the bang-bang switching curve
$\partial_{Y_1}W=\partial_{Y_2}W$ (a measure-zero locus). Right: the curvature
$\partial^2_{Y_1}W$ is bounded above (semiconcavity).}
\label{fig:value-switching-evac}
\end{figure}

\begin{figure}[t]\centering
\includegraphics[width=\textwidth]{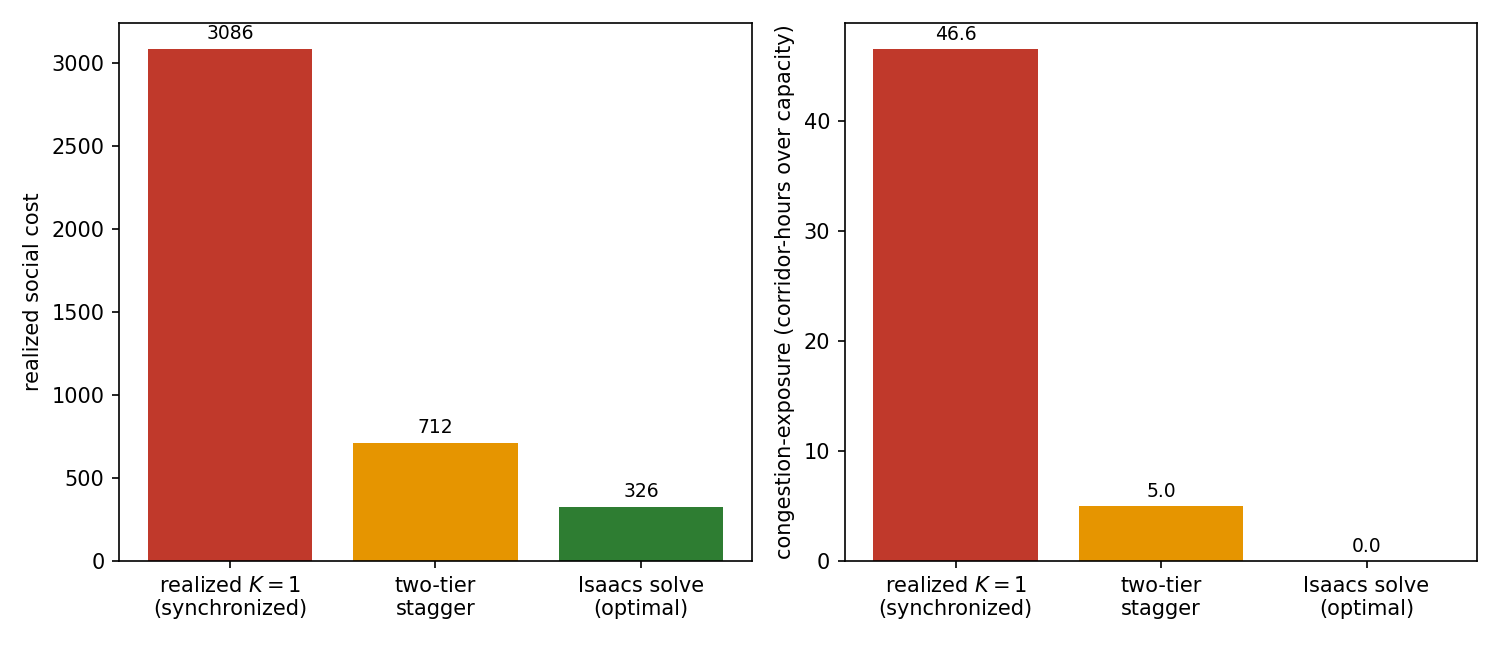}
\caption{Experiment 1 (Isaacs solve). Realised social cost and in-transit congestion
exposure under the realised synchronised $K=1$ order, a coarse two-tier stagger,
and the solved optimum. The solved optimum removes the over-capacity exposure and cuts
social cost by $89\%$; two tiers capture most of the gain.}
\label{fig:welfare-isaacs-evac}
\end{figure}

\subsubsection{External validation against documented speeds} The same three policies
admit an independent check in corridor-speed terms (Figure~\ref{fig:speed-validation}).
Mapping the corridor queue to an implied mean speed on the binding stretch, with the
single scale parameter fixed so that the realised synchronised branch reproduces the
documented Rita gridlock of $1$--$2$ mph, the realised order sits in that recorded
band, the solved Isaacs optimum holds the corridor at free flow throughout, and the
two-tier stagger recovers above the gridlock band after a brief dip. The
dynamic-traffic-assignment study of the same evacuation~\cite{HGAC_DTA} independently
brackets I-45 at $25$--$30$ mph in its base case and $45$--$50$ mph under full
contraflow, with the solved optimum at or above the upper band. Because only the one
congestion-speed parameter is pinned to the historical record, the optimal and
two-tier speeds are predictions of the solve rather than fitted quantities.

\begin{figure}[t]\centering
\includegraphics[width=\textwidth]{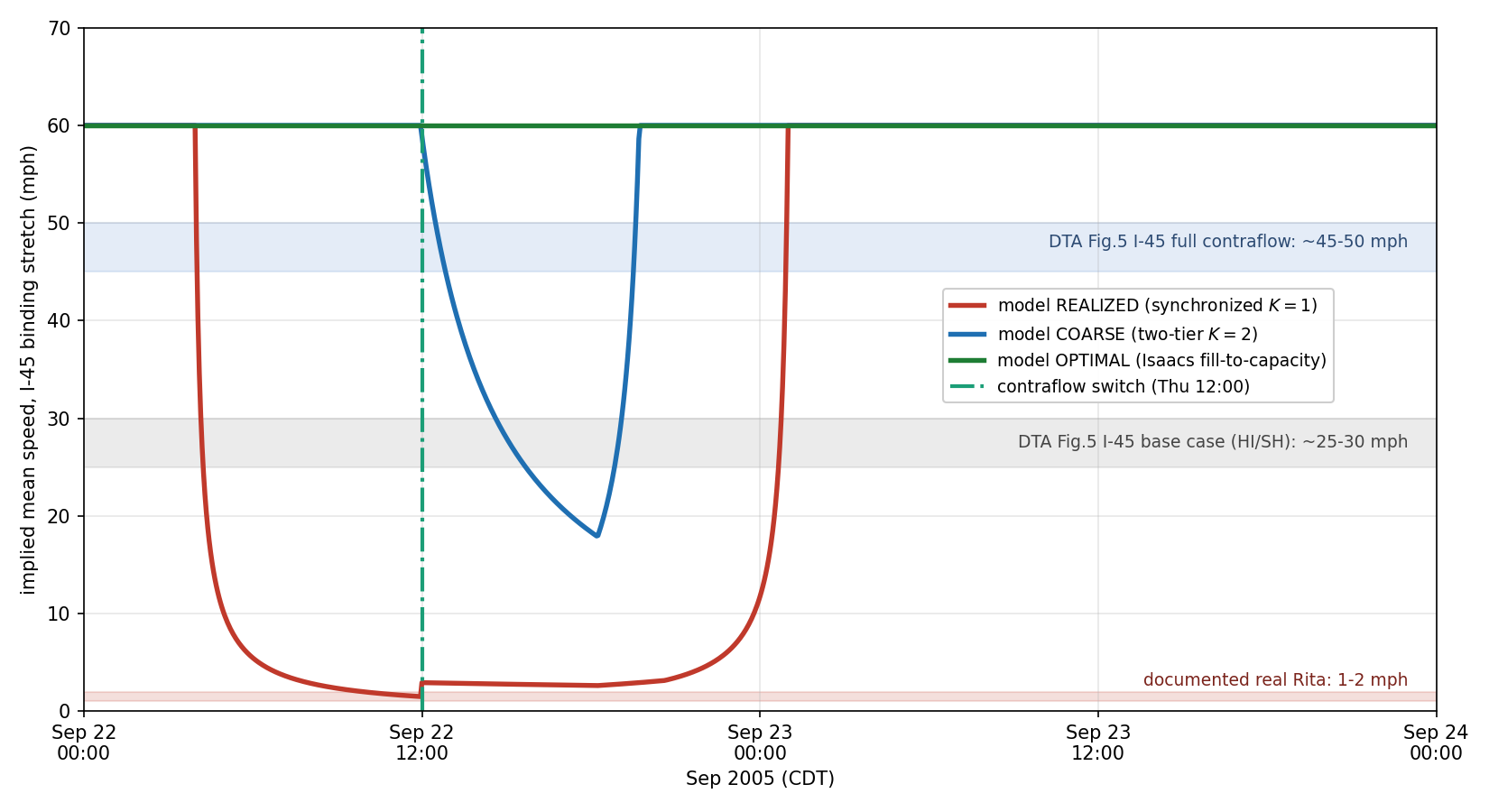}
\caption{Implied mean speed on the binding I-45 stretch under the realised synchronised
order ($K=1$), the two-tier stagger ($K=2$), and the solved Isaacs optimum, against the
documented Rita gridlock ($1$--$2$ mph) and the dynamic-traffic-assignment bands for the
same event~\cite{HGAC_DTA} (base case $25$--$30$ mph, full contraflow $45$--$50$ mph).
The queue-to-speed scale is fixed so the realised branch matches the recorded gridlock;
the optimal (free-flow) and two-tier speeds are then predictions. The contraflow switch
is marked.}
\label{fig:speed-validation}
\end{figure}

\subsection{Experiment 2: staggered disclosure and the value of precision}\label{sub:exp2}

Experiment~1 solves the social planner's problem, in which egress is metered directly.
The information designer cannot meter egress; it controls only what each zone is told
and when. Experiment~2 therefore solves the designer's problem over the zones'
decentralised response, with two instruments: the per-zone evacuation-order times (the
sequencing of public advisories) and the precision of the public signal. Two asymmetric
zones share the binding I-45 trunk: a coastal zone whose landfall and storm-surge hazard
arrive early and whose evacuation routes flood at a surge deadline, and a larger inland
zone whose hazard peaks later. Corridor demand exceeds capacity, so the trunk is
genuinely binding; gridlock is modelled as throughput breakdown, with vehicles unable to
move held in place and exposed; hazard is charged on the realised storm state. The
designer minimises the worst-case (relative-entropy) social cost over storm severity.
Unlike the planner of Experiment~1, the designer cannot stop a zone from synchronising
once it is told the danger is high, which is what makes sequencing and precision the
binding choices.

\subsubsection{Staggered disclosure} A single simultaneous advisory ($K=1$) forces the
coastal and inland exodus to share the corridor; neither clears, and the coastal zone is
caught by its early surge. Ordering the coast first and holding the inland zone, then
releasing it once the coast has cleared ($K=2$), keeps demand within capacity in each
window. Figure~\ref{fig:stagger-disclosure} reports the social cost against the length of
the inland hold. The optimum is interior: too short a hold and the shared corridor cannot
clear the coast before its surge deadline; too long and the inland zone is caught by its
own later landfall. The optimal hold of about ten hours lowers social cost by roughly
$70\%$ relative to the simultaneous order, and the corridor load (right panel) stays
within capacity in each window rather than breaking down. The policy is the deliberate,
temporary withholding the designer's problem prescribes: tell the coast to leave now, keep
the inland zone calm and in place although its risk is real and comparable, then tell it
to evacuate once the road is clear. This is the disclosure-timing analogue of the
two-tier phasing of Section~\ref{sec:special-evac}, achieved through the sequencing of
advisories rather than direct metering. The hold is robust to the worst-case storm
severity (the optimal hold is unchanged under the relative-entropy adversary) and stable
under sample refinement.

\paragraph{The value of precision.} The second instrument is the precision of the public
signal, the belief sharpness $\alpha$ controlling $\Pi_\infty(\alpha)$ through
\eqref{eq:Piinf}. Its value depends entirely on whether the designer can sequence
(Figure~\ref{fig:precision-complementarity}). Under a single common advisory ($K=1$),
raising precision \emph{raises} social cost: a sharp common signal makes every zone cross
its evacuation threshold at once, concentrating the exodus and deepening the gridlock, so
the optimal precision is the corner $\alpha\to0$, a deliberately vague advisory. This is
the informational-Braess effect of Section~\ref{sec:special-evac}, here on the precision
margin and confirmed on the calibrated, asymmetric network. Under staggered orders
($K=2$) the sign reverses: with the zones separated in time, a sharp signal lets each
evacuate decisively within its own window, and full precision is optimal. Sequencing and
precision are therefore \emph{complements}: precision is self-defeating as a single
broadcast but valuable once the designer can stagger. The interior disclosure optimum of
the scalar special case is thus a knife-edge between these two regimes; on the calibrated
network the optimum is a corner whose sign is set by whether sequencing is available, and
the structural lever is the staggering itself.

\begin{figure}[t]\centering
\includegraphics[width=\textwidth]{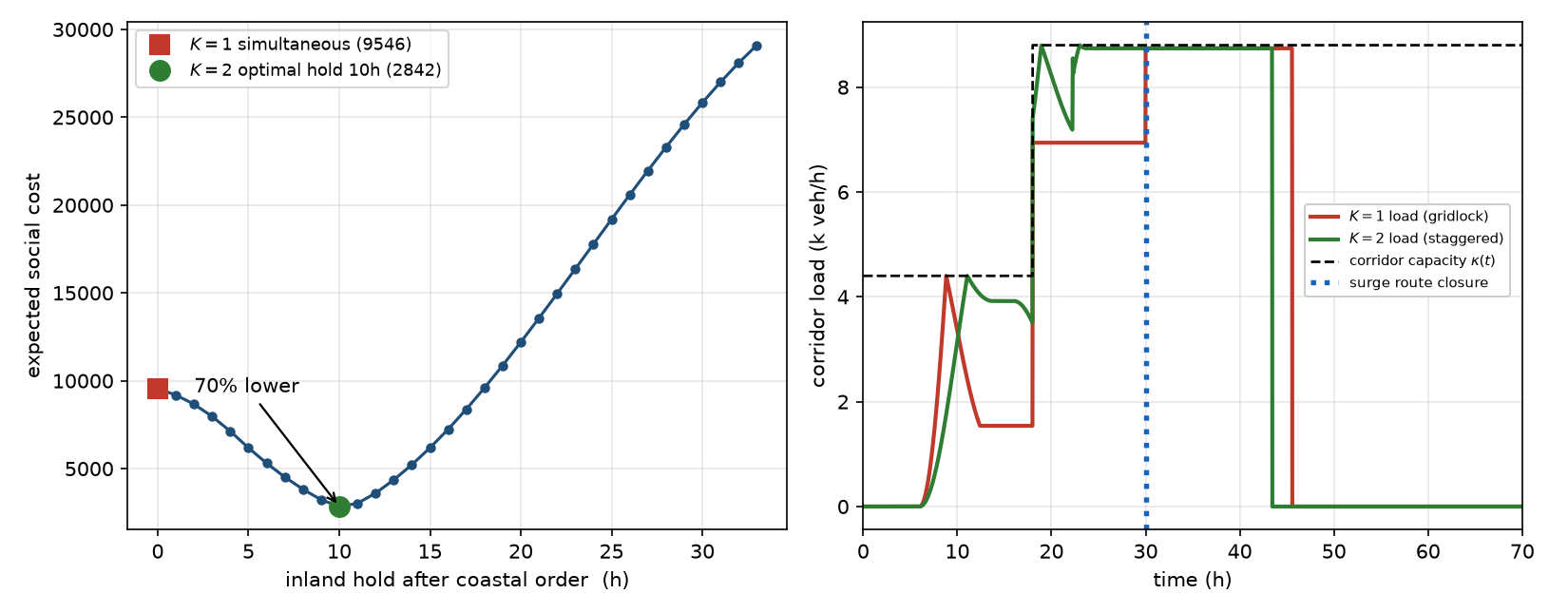}
\caption{Experiment 2, staggered disclosure. Left: expected social cost versus the length
of the inland hold; a single simultaneous order ($K=1$, square) is dominated by an
interior optimal hold of about ten hours ($K=2$, circle), which lowers cost by roughly
$70\%$. Right: corridor load under the two policies against capacity $\kappa(t)$; the
staggered policy keeps each window within capacity, the surge route-closure time marked.}
\label{fig:stagger-disclosure}
\end{figure}

\begin{figure}[t]\centering
\includegraphics[width=0.82\textwidth]{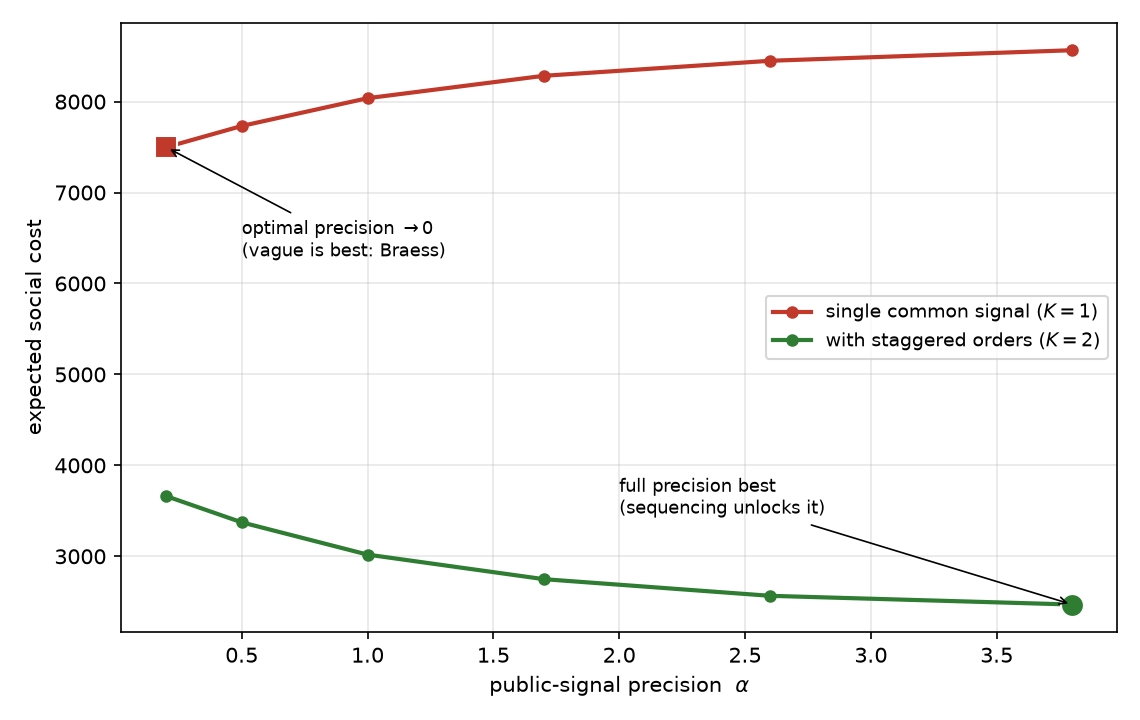}
\caption{Experiment 2, the value of precision. Expected social cost versus public-signal
precision $\alpha$. Under a single common signal ($K=1$) cost rises with precision and the
optimum is the vague corner $\alpha\to0$ (informational Braess); under staggered orders
($K=2$) the sign reverses and full precision is optimal. Sequencing and precision are
complements.}
\label{fig:precision-complementarity}
\end{figure}

\subsection{Discussion}\label{sub:disc}

The two experiments separate the model's two levers on a single calibrated event.
Experiment~1 shows that the phasing schedule $\theta$ has first-order welfare value
exactly when a shared corridor is gridlock-prone: the realised $K=1$ release
reproduces the Rita gridlock; the solved optimum cuts social cost by about
$89\%$, and a coarse two-tier stagger captures most of that reduction (a $77\%$
cut). Mapped to corridor speed, the realised branch reproduces the documented
$1$--$2$ mph Rita gridlock once a single scale is pinned, and the solved optimum then
holds free flow, in line with the dynamic-traffic-assignment bands for the same
event~\cite{HGAC_DTA} (Figure~\ref{fig:speed-validation}). Experiment~2
solves the designer's two instruments on asymmetric zones with hazard charged on the
realised storm: a staggered order (clear the coast, hold the inland zone, then
release it) lowers social cost by about $70\%$ relative to a simultaneous advisory at
an interior optimal hold, while sharpening a single common advisory \emph{raises}
social cost, the informational-Braess sign-reversal of the monotone-disclosure
benchmark, so that the optimal precision of a lone broadcast is vague; precision
becomes valuable only once orders are staggered, the two instruments acting as
complements. Both the phasing and the synchronisation effects are driven by the same
force,
the convexity of the total-delay exposure $\phi^{\mathrm{soc}}_\ell$ that prices
synchronisation, and the Rita record supports the mechanism on both the demand side
(a $53\%$ warning-day departure spike, with the warning-day cohort suffering the
worst delays) and the structural side (coastal--inland convergence on the I-45
trunk, signal-driven and demographic-invariant departure timing). The policy
reading is that an emergency manager facing a capacity-limited shared corridor
should not treat advisory transparency and order timing as free goods: the optimal
information structure is a deliberately staggered, coarsely tiered, publicly
announced schedule, and a lone common advisory should be kept vague unless it can be
sequenced, a structure that agencies already approximate and that the model derives.
\begin{remark}[Scope: why Hurricane Harvey (2017) is not a second case]
\label{rem:harvey}
The mechanism this paper isolates is disclosure-induced \emph{synchronisation} of
egress onto capacity-limited shared corridors: a common advisory moves zones in
unison, the convex total-delay exposure $\phi^{\mathrm{soc}}_\ell$ of
\eqref{eq:phi} prices that synchronisation, and the resulting forces are the
interior (or corner) optimal disclosure of
Proposition~\ref{prop:interior-explicit} and the staggered release of
Proposition~\ref{prop:twotier}. Hurricane Rita (2005) is the canonical instance of
this regime. Hurricane Harvey (2017) is deliberately \emph{not} treated here,
because it lies outside it: Houston authorities issued no general evacuation order,
and the dominant hazard was distributed rainfall flooding rather than corridor
congestion, so no shared corridor is over capacity and the externality
$\sum_\ell\eta_\ell\kappa_\ell(q_\ell-\kappa_\ell)^+$ does not bind. A Harvey
calibration would therefore test a different mechanism, the evacuate-versus-shelter
decision under a spatially distributed hazard, rather than validate the present
one. Tellingly, the decision to withhold a general order in 2017 was widely
reported as a response to the Rita experience~\cite{Domonoske2017}, i.e.\ a
judgement that the synchronisation
risk of a sharp common advisory outweighed the exposure of leaving residents in
place. In the language of \eqref{eq:Cbeta} that is the same tradeoff between the
value of information (term A) and the synchronisation cost (term B) that governs
$\beta^\star$, resolved at the no-disclosure corner of
Proposition~\ref{prop:interior-explicit}(ii). Harvey thus corroborates the
\emph{tension} the model formalises while falling outside its congestion-driven
scope; extending the framework to the evacuate--shelter margin and to
non-corridor (distributed) hazard is left to future work.
\end{remark}

\section{Conclusion}\label{sec:conclusion}
We developed a continuous-time stochastic Stackelberg framework in which an
emergency-management agency steers strategic evacuation zones through a public
advisory and a tiered release schedule, rather than through the dynamics. A belief
filter, exact between observed intensification jumps, summarises the jump-diffusion
storm; the zones play a congestion game whose Wardrop equilibrium reduces to a
single convex control problem (a potential reduction) but \emph{not} to the social
optimum, the wedge is the congestion externality. The leader's distributionally
robust problem is the unique viscosity solution of an Isaacs equation, verified
without smoothness and across activation epochs, with a Lebesgue-null switching set
and a well-posed Filippov closed loop. Because no transfer aligns incentives, the
leader's first-order condition retains an MPEC equilibrium-response term, the
precise sense in which optimal information design is a second-best congestion toll.
Two structural findings have no monotone-disclosure analogue: the optimal
information structure is a publicly announced, staggered order (deriving phased
evacuation from optimality), and the value of advisory precision is sign-ambiguous,
self-defeating as a single broadcast (an informational Braess effect) but
complementary with staggering; on the Rita calibration a staggered-disclosure design
lowers social cost by about $70\%$ relative to a simultaneous advisory. A calibration
to the 2005 Hurricane Rita evacuation (Section~\ref{sec:numerics}) bears out the
theory: the model reproduces the observed I-45 gridlock and shows that the optimal
staggered release would have cut social cost by about $89\%$, removing essentially
all of the in-transit congestion exposure, with
two tiers capturing most of that reduction.

\bibliographystyle{unsrtnat}
\bibliography{ref}

\end{document}